\documentclass[12pt,leqno]{amsart}  
\usepackage{amsmath,amstext,amsthm,amssymb,amsxtra}
\usepackage[colorlinks,citecolor=red,pagebackref,hypertexnames=true]{hyperref}
\usepackage[sorted,author-year,backrefs,msc-links,nobysame,initials]{amsrefs}
\usepackage{todonotes}
\usepackage{txfonts} 
\usepackage[T1]{fontenc}
\usepackage{lmodern}

\usepackage{mathtools,MnSymbol}
\mathtoolsset{showonlyrefs,showmanualtags}

\setlength{\textwidth}{16.6cm}
\setlength{\topmargin}{0cm}
\setlength{\oddsidemargin}{0cm}
\setlength{\evensidemargin}{0cm}
\allowdisplaybreaks


\theoremstyle{plain} 
\newtheorem{lemma}[equation]{Lemma} 
\newtheorem{proposition}[equation]{Proposition} 
\newtheorem{theorem}[equation]{Theorem} 
\newtheorem{corollary}[equation]{Corollary} 

\theoremstyle{definition}
\newtheorem{definition}[equation]{Definition} 

\theoremstyle{remark}
\newtheorem{remark}[equation]{Remark}
\newtheorem{example}[equation]{Example} 

\newtheorem{question}[equation]{Question}

\numberwithin{equation}{section}


\def\norm#1.#2.{\lVert#1\rVert_{#2}}
\def\Norm#1.#2.{\bigl\lVert#1\bigr\rVert_{#2}}
\def\NOrm#1.#2.{\Bigl\lVert#1\Bigr\rVert_{#2}}
\def\NORm#1.#2.{\biggl\lVert#1\biggr\rVert_{#2}}
\def\NORM#1.#2.{\Biggl\lVert#1\Biggr\rVert_{#2}}


\def\ip#1,#2,{\langle #1,#2\rangle}
\def\Ip#1,#2,{\bigl\langle#1,#2\bigr\rangle}
\def\IP#1,#2,{\Bigl\langle#1,#2\Bigr\rangle}



\def\ABs#1{\biggl\lvert#1\biggr\rvert}

\def\XXint#1#2#3{{\setbox0=\hbox{$#1{#2#3}{\int}$}
     \vcenter{\hbox{$#2#3$}}\kern-.5\wd0}}

\def\eqdef{\stackrel{\mathrm{def}}{{}={}}}
\def\dup{\overline{\textnormal d}}
\def\dlo{\underline{\textnormal d}}

%

%

%
%
%

%
%
%
%
%
%

\begin{document}

\title[Topological multiple recurrence]{Szemer\'edi's theorem, frequent hypercyclicity and multiple recurrence}

\author[G. Costakis]{George Costakis}
\address{Department of Mathematics, University of Crete, Knossos Avenue, GR-714 09 Heraklion, Crete, GREECE.} \email{costakis@math.uoc.gr}

\author[I. Parissis]{Ioannis Parissis$^{(1)}$}
\thanks{$^{(1)}$Research partially supported by Funda\c{c}\~{a}o para a Ci\^{e}ncia e a Tecnologia (FCT/Portugal) through MATH-LVT-Lisboa-89.}
\address{Centro de An\'{a}lise Matem\'{a}tica, Geometria e Sistemas Din\^{a}micos, Departamento de Matem\'atica, Instituto Superior T\'ecnico, Av. Rovisco Pais, 1049-001 Lisboa, PORTUGAL.}
\email{ioannis.parissis@gmail.com}

\keywords{Szemer\'edi's theorem, topologically multiply recurrent operator, hypercyclic operator, frequently universal sequences, weighted shift, adjoint of multiplication operator}
\subjclass[2010]{Primary: 47A16 Secondary: 37B20}

\begin{abstract} Let $T$ be a bounded linear operator acting on a complex Banach space $X$ and $(\lambda_n)_{n\in\mathbb N}$ a sequence of complex numbers. Our main result is that if $|\lambda_n|/|\lambda_{n+1}|\to 1$ and the sequence $(\lambda_n T^n)_{n\in\mathbb N}$ is frequently universal then $T$ is topologically multiply recurrent. To achieve such a result one has to carefully apply Szemer\'edi's theorem in arithmetic progressions. We show that the previous assumption on the sequence $(  \lambda_n)_{n\in\mathbb N}$ is optimal among sequences such that $|\lambda_{n}|/|\lambda_{n+1}|$ converges in $[0,\infty]$. In the case of bilateral weighted shifts and adjoints of multiplication operators we provide characterizations of topological multiple recurrence in terms of the weight sequence and the symbol of the
multiplication operator respectively.
\end{abstract}

\maketitle

\section{Introduction}
     In this note we discuss how some notions in dynamics of linear operators are connected to classical notions in
     topological dynamics using in an essential way Szemer\'edi's theorem in arithmetic progressions. Let us first fix
     some notation. As usual the symbols $\mathbb{N}$, $\mathbb{Z}$ stand for the sets of positive integers and integers respectively. Throughout this paper the
     letter $X$ will denote an infinite dimensional separable Banach space over the field of complex numbers
     $\mathbb{C}$. We denote by $\mathbb{D}$ the open unit disk of the complex plane centered at the origin and by $\mathbb{T}$ the unit circle.
     For $x\in X$ and $r$ a positive number we denote by $B(x,r)$ the open ball with center $x$ and radius $r$,
     i.e. $B(x,r)=\{ y\in X: \|  y-x\| <r\}$. For a subset $D$ of $X$ the symbol $\overline{D}$ denotes the closure of $D$.
     In general $T$ will be a bounded linear operator acting on $X$. For simplicity we will refer
     to $T$ as an operator on $X$.

     \begin{definition}
     The operator $T$ is called
     \emph{hypercyclic} if there exists a vector $x\in X$ whose orbit under $T$, i.e. the set
     $$\textnormal{Orb}(T,x)\eqdef\{ T^nx:n=0,1,2,\ldots \} ,$$
     is dense in $X$ and in this case $x$ is called a \emph{hypercyclic} vector for $T$.
     \end{definition}

     Under our assumptions on $X$,
     it is easy to check that hypercyclicity is equivalent to the notion of topological transitivity, i.e. $T$ is
     \emph{topologically transitive} if for every pair $(U,V)$ of non-empty open sets of $X$ there exists a positive
     integer $n$ such that $T^nU\cap V\neq \emptyset$. Hypercyclicity is a phenomenon which occurs only in infinite
     dimensions.
     For several examples of hypercyclic operators and a thorough analysis of linear dynamics,
     we refer to the recent books \cite{BM}, \cite{GrPe}.

     A more recent notion relevant to hypercyclicity, which examines how often the orbit of a hypercyclic
     vector visits a non-empty open set, was introduced by Bayart and Grivaux in \cite{BaGr1}, \cite{BaGr2}:
     \begin{definition}
     $T$ is called \emph{frequently hypercyclic}  if there is a vector $x$ such that for every non-empty open
     set $U$ the set
     $$\{ n\in \mathbb{N}: T^nx \in  U\}   $$
     has positive lower density.
     \end{definition}
     Recall that the lower and upper densities of a subset $B$ of $\mathbb{N}$ are
     defined as
     $$ \dlo(B)\eqdef\liminf_{N\to \infty}\frac{ |\{ n\in B :n\leq N \} |}{N}, \quad
        \dup(B)\eqdef\limsup_{N\to \infty}\frac{ |\{ n\in B :n\leq N \} |}{N} ,$$
     respectively. Here $|C|$ denotes the cardinality of a set $C\subset \mathbb{N}$.

Actually one can define a more general notion of frequent hypercyclicity for sequences of operators and in fact this notion has been introduced in \cite{BG} as follows:

\begin{definition} Let $(T_n)_{n\in\mathbb N}$ be a sequence of operators. We say that $(T_n)_{n\in\mathbb N}$ is \emph{frequently universal} if there is a vector $x\in X$ such that for every open set $U$ the set
	$$\{n\in\mathbb N: T_n x\in U\}$$
has positive lower density. 
\end{definition}

In this note our purpose is to connect the previous notions from linear dynamics to classical notions from topological dynamics and, in particular, to that of recurrence and (topological) multiple recurrence; see \cite{Fur}. We recall these notions here.

\begin{definition}
An operator $T$ is called \emph{recurrent} if for every non-empty open set $U$ in $X$ there is some positive integer $k$ such that
$$U\cap T^{-k}U \neq \emptyset .$$ A vector $x\in X$ is called \emph{recurrent for $T$} if there exists an increasing
sequence of positive integers $\{ n_k\}$ such that $T^{n_k}x\to x$ as $k\to +\infty$.
\end{definition}
\begin{remark} \label{r.rec}
It is easy to see that if $T$ is a recurrent operator on $X$ then the set of recurrent vectors for $T$ is dense
in $X$. Moreover, if $T$ is an invertible operator then $T$ is recurrent if and only if its inverse $T^{-1}$ is
recurrent.
\end{remark}

\begin{definition}
An operator $T$ is called \emph{topologically multiply recurrent} if for every non-empty open set $U$ in $X$ and
every positive integer $m$ there is some positive integer $k$ such that
$$U\cap T^{-k}U \cap \ldots \cap T^{-mk}U \neq \emptyset .$$
\end{definition}
Clearly every hypercyclic operator is recurrent. Of course, there is no reason for a hypercyclic operator
to be topologically multiply recurrent in general. On the other hand, as we remark in Section \ref{ss.Aimplies}, one can
trivially deduce from Szemer\'edi's theorem in arithmetic progressions that a frequently hypercyclic operator is
in fact topologically multiply recurrent.

In \cite{CoRu} it was shown that $T$ is hypercyclic whenever $(\frac{1}{n} T^n)_{n\in\mathbb N}$ is frequently universal. Motivated by this, we examine when the hypothesis that $(\lambda_nT^n)_{n\in\mathbb N}$ is frequently universal for some  sequence of complex numbers $(\lambda_n)_{n\in \mathbb N}$ implies that $T$ is topologically multiply recurrent. Our main result is the following:

\begin{theorem}\label{t.main1} Let $( \lambda_n)_{n\in\mathbb N}$ be a sequence of complex numbers such that 
	$$ \lim_{n\to\infty}\frac{|\lambda_n|}{ |\lambda_{n+\tau}|} =1,$$
for some positive integer $\tau$. Suppose that the sequence of operators $(\lambda_n T^n)_{n\in\mathbb N}$ is frequently universal. Then $T$ is topologically multiply recurrent.
\end{theorem}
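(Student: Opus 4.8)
The plan is to reduce topological multiple recurrence to a single rescaling estimate and then to produce the relevant arithmetic progression by a careful application of Szemer\'edi's theorem. Fix a non-empty open set $U$ and a positive integer $m$; choose $p\in U$ and $r>0$ with $B(p,r)\subseteq U$, and set $V=B(p,\rho)$ for a small $\rho$ to be fixed later. Let $x$ be a vector witnessing the frequent universality of $(\lambda_nT^n)_{n\in\mathbb N}$, so that $\mathcal B=\{n\in\mathbb N:\lambda_nT^nx\in V\}$ has positive lower, hence positive upper, density. The key algebraic observation is that for any $a$ and any common difference $k$ the vector $y=\lambda_aT^ax$ satisfies $T^{jk}y=\frac{\lambda_a}{\lambda_{a+jk}}\big(\lambda_{a+jk}T^{a+jk}x\big)$; hence if $a,a+k,\dots,a+mk$ all lie in $\mathcal B$ and the complex ratios $\lambda_a/\lambda_{a+jk}$ are close to $1$ for $j=1,\dots,m$, then $y\in V\subseteq U$ and each $T^{jk}y$ lies in $U$, giving $y\in U\cap T^{-k}U\cap\cdots\cap T^{-mk}U$. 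Thus everything reduces to finding an $(m+1)$-term progression inside $\mathcal B$ along which $\lambda_a/\lambda_{a+jk}\to1$.

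The phase of $\lambda_n$ is not controlled by the hypothesis, so I would first remove it by a finite coloring: partition $\mathbb T$ into finitely many arcs of length less than a prescribed $\delta$ and also split according to the residue of $n$ modulo $\tau$. Since $\mathcal B$ has positive upper density and there are finitely many colors, one color class $E^\ast\subseteq \mathcal B$ has positive upper density; on $E^\ast$ the argument of $\lambda_n$ varies by less than $\delta$, and all of $E^\ast$ lies in one residue class modulo $\tau$. The latter is crucial because any progression contained in $E^\ast$ then has common difference $k$ divisible by $\tau$, which is exactly the shift controlled by the hypothesis: writing $k=s\tau$, one has $\abs{\lambda_a}/\abs{\lambda_{a+jk}}=\prod_{i=0}^{js-1}\abs{\lambda_{a+i\tau}}/\abs{\lambda_{a+(i+1)\tau}}$, a product of at most $ms$ factors each tending to $1$. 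Consequently, provided $k$ stays bounded while the starting point $a$ is taken large, both the modulus (by this telescoping) and the phase (by the coloring) of $\lambda_a/\lambda_{a+jk}-1$ become as small as we wish, which is all we need after fixing $\rho$ and $\delta$ small in terms of $r$ and $\|p\|$.

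It therefore remains to produce, inside the positive-upper-density set $E^\ast$, infinitely many $(m+1)$-term progressions whose common difference is bounded and whose starting point tends to infinity. This is the heart of the matter and the point where Szemer\'edi's theorem must be applied with care: a single progression furnished by Szemer\'edi has no control on its common difference, and a large common difference would overwhelm the telescoping product above. To obtain the required uniformity I would argue as follows. Because $E^\ast$ has positive upper density, its mass cannot remain in a bounded range, so for every threshold $R$ there is an interval $[R,N]$ in which $E^\ast$ has density bounded below by a fixed constant; subdividing $[R,N]$ into blocks of a fixed length $\ell$ (depending only on $m$ and on that density, via Szemer\'edi's theorem) and averaging, some block at scale at least $R$ retains the density, and Szemer\'edi's theorem applied inside this block of length $\ell$ yields an $(m+1)$-term progression with start $a\ge R$ and common difference $k\le\ell/m$. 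Letting $R\to\infty$ and pigeonholing over the finitely many possible values of $k$, I obtain a single $k$, automatically a multiple of $\tau$, realized by progressions with $a\to\infty$.

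Finally I would choose such a progression with $a$ large enough that the telescoping and phase estimates make $\abs{\lambda_a/\lambda_{a+jk}-1}$ smaller than the tolerance dictated by $U$, set $y=\lambda_aT^ax$, and conclude $y\in U\cap T^{-k}U\cap\cdots\cap T^{-mk}U$. The main obstacle, and the step I expect to require the most care, is exactly this extraction of a bounded common difference with unbounded start: it is what couples Szemer\'edi's theorem to the slowly varying hypothesis $\abs{\lambda_n}/\abs{\lambda_{n+\tau}}\to1$, and it explains why this precise hypothesis, rather than mere boundedness of the ratios, is the correct one.
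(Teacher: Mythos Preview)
Your argument is correct and hits the same two beats as the paper's proof: first neutralize the phases of the $\lambda_n$, then produce an $(m+1)$-term progression inside the return set with a \emph{fixed} common difference and arbitrarily large starting point, so that the telescoping product of ratios $\abs{\lambda_{a+i\tau}}/\abs{\lambda_{a+(i+1)\tau}}$ can be made close to~$1$. The packaging differs in two places. For the phases, the paper isolates a separate ``rotations'' lemma showing that $(\lambda_nT^n)$ has property~$\mathcal A$ iff $(\abs{\lambda_n}T^n)$ does, thereby reducing outright to positive $\lambda_n$; you instead keep the complex $\lambda_n$ and pigeonhole the phase into a short arc, which is the same idea carried inside the main argument. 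For the progression, the paper cites the ergodic--theoretic strengthening of Szemer\'edi's theorem (there exists a step $k$ for which the set of admissible starting points is infinite) as a black box and gets the $\tau$-divisibility of the step by passing to a longer progression and subsampling; you instead derive the bounded-step/large-start conclusion by a more elementary route---finite Szemer\'edi applied in blocks of fixed length placed farther and farther out, followed by pigeonhole on the step---and enforce $\tau\mid k$ by the residue coloring. Your route avoids invoking the ergodic strengthening directly, at the cost of re-proving it; the paper's route is shorter once that strengthening is granted.
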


The hypothesis of Theorem \ref{t.main1} is optimal among complex sequences such that the limit $\lim_{n\to\infty}|\lambda_n|/|\lambda_{n+\tau}|$ exists. More precisely we have:

\begin{proposition}\label{p.bad} Let $a\in\mathbb [0,\infty)\setminus\{1\}$  and $\tau$ be a positive integer. There exists a sequence $( \lambda_n)_{n\in\mathbb N}$ and an operator $T$ which is not even recurrent, such that  
	$$ \lim_{n\to\infty}\frac{|\lambda_n|} {|\lambda_{n+\tau}|}=a $$
 and the sequence $( \lambda_nT^n)_{n\in\mathbb N}$ is frequently universal.
\end{proposition}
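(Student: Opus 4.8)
The plan is to produce explicit examples, choosing $\lambda_n$ to be a pure geometric sequence whenever possible and exploiting the resulting identity $\lambda_nT^n=(\lambda_1T)^n$. For $a\in(0,\infty)\setminus\{1\}$ put $\rho\eqdef a^{1/\tau}$ and $\lambda_n\eqdef\rho^{-n}$; then $|\lambda_n|/|\lambda_{n+\tau}|=\rho^{\tau}=a$ identically, and $\lambda_nT^n=(\rho^{-1}T)^n$. Hence ``$(\lambda_nT^n)_n$ is frequently universal'' is literally ``$S\eqdef\rho^{-1}T$ is frequently hypercyclic'', and the whole problem reduces to exhibiting a non-recurrent $T$ for which $\rho^{-1}T$ is frequently hypercyclic. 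The key point is that multiplying by $\rho\neq1$ can move an operator's spectral picture across $\mathbb T$, thereby decoupling the frequent hypercyclicity of $S$ from the recurrence of $T$.

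First I would record two elementary non-recurrence criteria. If $\|T\|<1$ then every orbit tends to $0$, and if $T$ is invertible with $\|T^{-1}\|<1$ then every nonzero orbit has norm tending to $\infty$; in either case, comparing norms shows that a sufficiently small ball $B(v,\varepsilon)$ about a fixed $v\neq0$ satisfies $T^kB(v,\varepsilon)\cap B(v,\varepsilon)=\emptyset$ for all $k\geq1$, so $T$ is not recurrent. With this in hand I would build $S$ from a symbol that crosses $\mathbb T$ while $T$ keeps its symbol off $\mathbb T$. Work on $X=H^2(\mathbb D)$ and let $\phi(z)=\rho+\rho\delta z$ with $0<\delta<|1-1/\rho|$. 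Then $\phi(\mathbb D)=B(\rho,\rho\delta)$ is a disc disjoint from $\mathbb T$ — inside $\mathbb D$ when $\rho<1$ and inside $\{|z|>1\}$ when $\rho>1$ — so $T\eqdef M_\phi^{*}$ is either a strict contraction or a uniform expansion, hence not recurrent. On the other hand $\rho^{-1}\phi(\mathbb D)=B(1,\delta)$ meets $\mathbb T$ in an arc, and the reproducing kernels $k_w$ are eigenvectors of $S=M_{\rho^{-1}\phi}^{*}$ with eigenvalues $\overline{\rho^{-1}\phi(w)}$; those attached to $\{w\in\mathbb D:|\phi(w)|=\rho\}$ have unimodular eigenvalues and span $H^2$, so $S$ is frequently hypercyclic by the Godefroy--Shapiro/Bayart--Grivaux eigenvector criterion (see \cite{BM,GrPe}). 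For $a\in(0,1)$ one may instead take the even simpler $T=B$, the unilateral backward shift on $\ell^2$: since $B^ke_0=0$ for $k\geq1$, a small ball about $e_0$ never returns, so $B$ is non-recurrent, while $\rho^{-1}B$ (with $\rho^{-1}>1$) is a Rolewicz-type scaled shift and is frequently hypercyclic.

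The value $a=0$ must be treated apart, since no geometric sequence has ratio $0$. Here I would again take $T=B$ and choose $|\lambda_n|$ growing super-geometrically, e.g. $|\lambda_n|=e^{n^2}$, so that $|\lambda_n|/|\lambda_{n+\tau}|=e^{-2n\tau-\tau^2}\to0$. Frequent universality of $(\lambda_nB^n)$ then follows from the usual disjoint-block construction: over a partition of $\mathbb N$ into sets of positive lower density one assigns the members of a countable dense set and plants the corresponding finite blocks in the universal vector $x$, the required coefficients being $y_k/\lambda_n$. These are summable precisely because $|\lambda_n|\to\infty$; the contributions of earlier blocks are annihilated by the shift (they are pushed past the origin), while those of later blocks are damped by the factors $|\lambda_n|/|\lambda_{n'}|<1$, so widely spaced windows make the errors arbitrarily small.

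The crux is the regime $a>1$. When $\lambda_n\to0$, approximating a fixed nonzero target along a set of times of positive lower density forces $\|T^nx\|\to\infty$ there, i.e. $T$ must expand; but an expanding unilateral backward shift has unbounded weight products and is therefore hypercyclic — hence recurrent — whereas an expanding bilateral shift magnifies the earlier blocks of the construction into uncontrollable mass, so the shift constructions collapse exactly here. This is what the passage through $M_\phi^{*}$ repairs: the expansion responsible for non-recurrence comes from placing $\phi(\mathbb D)$ outside $\overline{\mathbb D}$, while the unimodular eigenvectors responsible for frequent hypercyclicity of $\rho^{-1}T$ come from the arc $\rho^{-1}\phi(\mathbb D)\cap\mathbb T$, and these two requirements are compatible precisely because $\rho=a^{1/\tau}\neq1$. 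The one genuinely technical point, and where I expect to spend the real effort, is confirming that the eigenvectors along this arc are \emph{perfectly} spanning, so that $S$ is frequently and not merely plainly hypercyclic; this is exactly the hypothesis of the eigenvector criterion.
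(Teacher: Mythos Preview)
Your proposal is correct and follows essentially the same route as the paper: geometric $\lambda_n$ so that $\lambda_nT^n=(\rho^{-1}T)^n$, backward shift for the contractive regime and for $a=0$ (the paper uses $\lambda_n=n!$ where you use $e^{n^2}$), and an adjoint multiplication operator $M_\phi^*$ with $\phi(\mathbb D)$ lying off $\mathbb T$ but $\rho^{-1}\phi(\mathbb D)$ meeting $\mathbb T$ for the expansive regime. The only cosmetic differences are that the paper reduces to $\tau=1$ first and takes $\phi$ to be a Riemann map onto a sector rather than your explicit affine map; your worry about ``perfectly spanning'' is unnecessary, since the frequent hypercyclicity of $M_\psi^*$ when $\psi(\mathbb D)\cap\mathbb T\neq\emptyset$ is exactly the result of \cite{BaGr2} that the paper invokes via Proposition~\ref{p.adjequiv}.
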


The rest of the paper is organized as follows. In paragraph \ref{ss.szem} we state Szemer\'edi's theorem and describe its variations which result from the ergodic-theoretic proofs. In paragraph \ref{ss.weakerproperties} we describe some weaker properties that follow from frequent universality. We will use these weaker notions to prove the main theorem in paragraph \ref{ss.Aimplies}. We close this section by giving some forms of polynomial recurrence that follow under the same hypotheses as in Theorem \ref{t.main1}. In section \ref{s.bad} we give the proof of Proposition \ref{p.bad}. We also provide several examples of sequences such that our main theorem applies. In section \ref{s.shifts} we characterize topologically multiply recurrent weighted shifts in terms of their weight sequences. In section \ref{s.adjoints} we look at adjoints of
multiplication operators on suitable Hilbert spaces. We characterize when these operators are topologically
multiply recurrent by means of a geometric condition on the symbol of the multiplication operator. It turns out
that such operators are topologically multiply recurrent if and only if they are frequently hypercyclic. Finally
in section \ref{s.remarks} we propose some further questions related to the results of the present paper.

\section{Acknowledgments} Part of this work was carried out during the second author's visit at the Department of Mathematics of the University of Crete in August 2010. The second author is especially grateful to Mihail N. Kolountzakis for his hospitality during this visit. We are grateful to the referee for a very careful reading of the original manuscript. We would also like to thank Nikos Frantzikinakis for bringing to our attention the strengthened form of Szemer\'edi's theorem.

\section{Szemer\'edi's theorem and multiple recurrence} \label{s.main}

In this section we give the proof of Theorem \ref{t.main1}. Our proof relies heavily on Szemere\'edi's theorem in arithmetic progressions and variations of it so we first recall its statement and explain some of its generalizations.

\subsection{Szemer\'edi's theorem and variations}\label{ss.szem} The `infinite' version of the classical Szemer\'edi theorem on arithmetic progressions is usually stated in the following form:

\begin{theorem}[Szemer\'edi's theorem]\label{t.szem1} Let $A\subset \mathbb N$ have positive upper density $\dup(A)>0$. Then for any positive integer $m$ there exist positive integers $a,r$ such that the $m$-term arithmetic progression
	$$a,a+r,a+2r,\ldots,a+mr,$$
is contained in $A$.
\end{theorem}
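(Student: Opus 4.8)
The plan is to follow Furstenberg's ergodic-theoretic route, which is the natural choice here since the paper will in any case need the ergodic variations of the theorem. The argument splits into two logically independent pieces: a soft transference step, the \emph{correspondence principle}, which converts the combinatorial assertion into a recurrence statement for a measure preserving system, and a hard dynamical step, the \emph{multiple recurrence theorem}, which supplies the recurrence. I would not attempt a self-contained combinatorial argument; instead the goal is to isolate exactly where the genuine content sits.

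First I would establish the correspondence principle. Given $A\subset\mathbb N$ with $\dup(A)>0$, I embed the indicator $\mathbf 1_A$ into $\{0,1\}^{\mathbb Z}$, let $T$ be the shift, and extract a weak-$*$ limit of empirical averages along a subsequence $N_k$ realizing the upper density. This produces a shift-invariant Borel probability measure $\mu$ and the clopen cylinder $E=\{\omega:\omega_0=1\}$ with $\mu(E)=\dup(A)>0$. The output I want is the inequality
$$\dup\bigl(A\cap(A-r)\cap\cdots\cap(A-mr)\bigr)\ \geq\ \mu\bigl(E\cap T^{-r}E\cap\cdots\cap T^{-mr}E\bigr),$$
valid for every $r\geq 1$ and every $m$, where $A-jr=\{n:n+jr\in A\}$. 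Once the right-hand side is positive for some $r$, the left-hand set is nonempty, so some $a$ lies in it, which is precisely the statement that $a,a+r,\dots,a+mr$ all belong to $A$.

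The heart of the matter is then Furstenberg's multiple recurrence theorem: for any measure preserving system $(Y,\mathcal B,\mu,T)$, any $E$ with $\mu(E)>0$, and any $m$, there is an $r\geq 1$ with $\mu(E\cap T^{-r}E\cap\cdots\cap T^{-mr}E)>0$. I would prove this through the Furstenberg--Zimmer structure theory. The idea is to realize the system as an inverse limit of a (transfinite) tower of extensions over the trivial factor, where each successive step is either a compact (isometric) extension or a weakly mixing extension. One then checks that the multiple recurrence property, call it the \emph{SZ} property, holds for the trivial system, is inherited under weakly mixing extensions by a van der Corput difference argument that pushes the relevant multiple averages down to the base, is inherited under compact extensions because the orbit in question is almost periodic so the iterates return near-simultaneously, and is preserved under inverse limits by approximation. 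Transfinite induction up the tower then delivers SZ for the whole system, and hence positivity of the intersection above.

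The hard part will be the dynamical step, and within it the treatment of compact and relatively compact extensions together with the verification that SZ genuinely lifts through them; this is where the true depth of Szemer\'edi's theorem resides, the correspondence principle being essentially formal. An honest alternative would be to invoke Szemer\'edi's original combinatorial proof via the regularity lemma, or Gowers's quantitative higher-order Fourier-analytic proof, but for the present paper the ergodic formulation is the most convenient both to state and to strengthen into the refined versions used later.
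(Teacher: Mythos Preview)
The paper does not prove this theorem at all: Szemer\'edi's theorem is stated as a classical result and the reader is referred to the literature (in particular \cite{TV} and, for the ergodic approach, \cite{Fur1,FurKa}). So there is no ``paper's own proof'' to compare against; the theorem is simply invoked.

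That said, your outline is an accurate high-level description of Furstenberg's proof, and it is in fact the route the paper implicitly leans on, since the strengthened form (Theorem~\ref{t.szem2}) used in the main argument is extracted from the ergodic-theoretic proofs. Your summary of the correspondence principle is correct, and your description of the structure-theoretic induction (trivial factor, compact extensions, weakly mixing extensions, inverse limits, with SZ preserved at each step) matches the Furstenberg--Katznelson scheme. You are also right that the genuine depth lies in lifting SZ through compact extensions; the correspondence principle and the weakly mixing step are comparatively soft. For the purposes of this paper, however, a full proof would be disproportionate: the appropriate move is exactly what the paper does, namely to cite the result and its ergodic refinements.
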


By now there are numerous proofs of this theorem as well as several extensions of it. We will not try to give an account of those here. Instead, we refer the interested reader to \cite{TV} and the references therein. Here we are interested in the following strengthened version which is implicit in any ergodic theoretic proof of Szemer\'edi's theorem and, in particular, in \cite{Fur1,FurKa}.

\begin{theorem}\label{t.szem2} Let $A\subset \mathbb N$ be a set with $\dup(A)>0$ and $m$ a fixed positive integer. For any positive integer $k$ consider the set
	$$\textnormal{AP}(k)\eqdef\{a\in\mathbb N: a,a+k,a+2k,\ldots,a+mk \in A\}.$$
There is a positive integer $k$ such that the set $\textnormal{AP(k)}$ is infinite.
\end{theorem}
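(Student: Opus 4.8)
The plan is to deduce this strengthened statement from the ergodic-theoretic machinery behind Szemer\'edi's theorem, since a purely finitary approach based on Theorem \ref{t.szem1} only produces, at each scale $N$, a good common difference that may drift to infinity with $N$. The crux is to pin down a \emph{single} difference $k$ serving infinitely many starting points, and this is exactly the content of the multiple recurrence phenomenon once it is transported through the Furstenberg correspondence principle.

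First I would invoke the correspondence principle: since $\dup(A)>0$, there is a probability measure-preserving system $(Y,\mathcal B,\mu,S)$ and a measurable set $B$ with $\mu(B)=\dup(A)>0$ such that for every finite family $n_0,\ldots,n_m$ of non-negative integers,
$$\dup\Big(\bigcap_{i=0}^{m}(A-n_i)\Big)\ \geq\ \mu\Big(\bigcap_{i=0}^{m}S^{-n_i}B\Big),$$
where $A-n\eqdef\{a\in\mathbb N: a+n\in A\}$. The point of this reduction is that positivity of \emph{measure} for an intersection in $Y$ forces positivity of \emph{upper density} for the corresponding intersection of translates of $A$. Next I would apply the Furstenberg--Katznelson multiple recurrence theorem to $B$ and $S$: for the fixed $m$,
$$\liminf_{N\to\infty}\frac1N\sum_{n=1}^{N}\mu\big(B\cap S^{-n}B\cap\cdots\cap S^{-mn}B\big)>0.$$
Because each summand is bounded by $\mu(B)\le 1$, positivity of this liminf forces the set of $n$ with $\mu(B\cap S^{-n}B\cap\cdots\cap S^{-mn}B)>0$ to have positive lower density, and in particular to be nonempty. (For the statement one needs only a single such $n$, which is already given by the basic multiple recurrence theorem; the averaged form is what makes this a genuine strengthening.)

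Finally, fixing such a $k$ and specializing the correspondence inequality to $n_i=ik$ yields
$$\dup(\textnormal{AP}(k))=\dup\Big(\bigcap_{i=0}^{m}(A-ik)\Big)\ \geq\ \mu\big(B\cap S^{-k}B\cap\cdots\cap S^{-mk}B\big)>0,$$
where I have used that $a\in\textnormal{AP}(k)$ precisely when $a\in A-ik$ for every $0\le i\le m$. A set of positive upper density is infinite, so $\textnormal{AP}(k)$ is infinite, which is the desired conclusion.

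The main obstacle, conceptually, is isolating the step that genuinely requires ergodic theory rather than the finitary theorem. A Varnavides-type count shows that $A\cap[1,N]$ contains at least $cN^2$ progressions of length $m+1$ for a constant $c=c(\dup(A),m)>0$, and pigeonholing over the common difference produces many starting points for some difference $r_N\le N$; but $r_N$ may depend on $N$ and escape to infinity, so it never pins down one $k$ with $\textnormal{AP}(k)$ infinite. Passing to the measure-preserving system converts the quantitative positivity into the measure-positivity $\mu(B\cap S^{-k}B\cap\cdots\cap S^{-mk}B)>0$ for a \emph{fixed} $k$, and the correspondence principle then upgrades ``nonempty progression'' to ``infinitely many progressions sharing the difference $k$''. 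The only care required is to record the correspondence inequality for the full intersection, with upper density on the left-hand side, since it is precisely this quantitative form that produces infinitude rather than mere existence.
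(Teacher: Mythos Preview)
Your argument is correct. The paper does not actually supply a proof of this theorem; it simply records it as a known strengthening that is ``implicit in any ergodic theoretic proof of Szemer\'edi's theorem'' and cites \cite{Fur1,FurKa}. What you have written is precisely the argument the paper is pointing to: Furstenberg's correspondence principle together with the multiple recurrence theorem yields a fixed $k$ for which $\mu\big(B\cap S^{-k}B\cap\cdots\cap S^{-mk}B\big)>0$, and then the correspondence inequality upgrades this to $\dup(\textnormal{AP}(k))>0$, hence $\textnormal{AP}(k)$ is infinite.

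Two minor remarks. First, the correspondence principle is usually stated with $\mu(B)\geq \dup(A)$ rather than equality; this makes no difference here. Second, as you yourself note, the averaged (liminf) form of multiple recurrence is more than you need---Furstenberg's original result already produces one good $k$. Your discussion of why the finitary Theorem~\ref{t.szem1} together with Varnavides-type pigeonholing fails to pin down a single $k$ is a useful clarification of why the ergodic route is the natural one here.
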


In the recent years several authors have given generalizations of Szemer\'edi's theorem by looking for patterns of the form
$$a,a+[\gamma_1(\ell)],a+[\gamma_2(\ell)],\ldots,a+[\gamma_m(\ell)]$$
in sets of positive upper density. Here $\gamma_1,\gamma_2,\ldots,\gamma_m$ are appropriate functions and $[x]$ denotes the integer part of a real number. A typical theorem in this direction is the following result from \cite{BELI}.

\begin{theorem}\label{t.polszem}Let $A\subset \mathbb N$ be a set of positive upper density, $m$ a positive integer and $p_1,p_2,\ldots,p_m$ polynomials with rational coefficients taking integer values on the integers and satisfying $p_j(0)=0$ for all $j=1,2,\ldots,m$. For $k\in\mathbb N$ consider the set	
$$	\textnormal{AP}^{pol}(k)\eqdef\{ a\in\mathbb N: a,a+p_1(k),a+p_2(k),\ldots,a+p_m(k)\in A\}.$$
There exists a positive integer $k$ with $p_j(k)\neq 0$ for all $j=1,2,\ldots,m$, such that the set $\textnormal{AP}^{pol}(k)$ is infinite.
\end{theorem}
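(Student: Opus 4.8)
The plan is to establish this by the ergodic--theoretic route alluded to just before the statement: the strengthened ``single common difference'' conclusion is exactly what the Furstenberg correspondence principle delivers when it is combined with the polynomial multiple recurrence theorem of Bergelson and Leibman. First I would pass from the combinatorial to the dynamical setting. Since $\dup(A)>0$, the upper Banach density of $A$ is positive as well, and so the Furstenberg correspondence principle furnishes an (invertible) measure preserving system $(Y,\mathcal B,\mu,S)$ and a set $A_0\in\mathcal B$ with $\mu(A_0)>0$ such that, for every finite family of integers $n_1,\ldots,n_m$, the set $A\cap(A-n_1)\cap\cdots\cap(A-n_m)$ has upper Banach density at least $\mu\bigl(A_0\cap S^{-n_1}A_0\cap\cdots\cap S^{-n_m}A_0\bigr)$. (I write $S$ for the transformation so as not to clash with the operator $T$.)

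The second and by far the deepest step is to feed the polynomials $p_1,\ldots,p_m$ into the averaged form of the recurrence theorem of \cite{BELI}: since the $p_j$ are integer valued with $p_j(0)=0$, one has
\[
\liminf_{N\to\infty}\frac1N\sum_{k=1}^{N}\mu\bigl(A_0\cap S^{-p_1(k)}A_0\cap\cdots\cap S^{-p_m(k)}A_0\bigr)>0.
\]
Each summand lies in $[0,1]$, so a Ces\`aro average bounded below by a constant $c>0$ forces the set
\[
K\eqdef\bigl\{k\in\mathbb N:\ \mu(A_0\cap S^{-p_1(k)}A_0\cap\cdots\cap S^{-p_m(k)}A_0)>0\bigr\}
\]
to have lower density at least $c$; in particular $K$ is infinite. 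Since each $p_j$ is a non-zero polynomial it has only finitely many integer roots, so the set of $k$ at which some $p_j$ vanishes is finite, and I may therefore choose $k\in K$ with $p_j(k)\neq0$ for every $j$.

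For this distinguished $k$ the measure of the intersection is strictly positive, so by the correspondence of the first step the set $A\cap(A-p_1(k))\cap\cdots\cap(A-p_m(k))$ has positive upper Banach density and is hence infinite. Its elements $a$ are precisely those for which $a,a+p_1(k),\ldots,a+p_m(k)\in A$, i.e. the elements of $\textnormal{AP}^{pol}(k)$; thus $\textnormal{AP}^{pol}(k)$ is infinite, which is the assertion.

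I expect the genuine obstacle to be the averaged recurrence theorem itself, and nothing else: it is precisely the input that upgrades the linear statement of Theorem \ref{t.szem2} to its polynomial analogue, and its proof requires Bergelson's PET (polynomial exhaustion) induction together with van der Corput estimates and an analysis of the characteristic factors of the relevant averages. By contrast the two flanking steps are routine: the correspondence principle is standard, and the simultaneous extraction of one admissible difference $k$ and of the infinitude of $\textnormal{AP}^{pol}(k)$ is mere bookkeeping with the positivity of the Ces\`aro averages and with the finiteness of the zero sets of the $p_j$.
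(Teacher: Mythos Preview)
The paper does not give its own proof of this theorem; it simply quotes it as a known result from \cite{BELI} (together with the surrounding remarks about ergodic-theoretic proofs). Your sketch is the standard derivation: pass to a measure preserving system via the Furstenberg correspondence principle, invoke the averaged polynomial recurrence theorem of Bergelson--Leibman to obtain a set $K$ of positive lower density of admissible differences, discard the finitely many $k$ at which some $p_j$ vanishes, and read off from positivity of the upper Banach density that $\textnormal{AP}^{pol}(k)$ is infinite. This is exactly how the ``strengthened'' single-common-difference form emerges from the ergodic approach, and your identification of the Bergelson--Leibman theorem as the only non-routine input is accurate. So there is nothing to compare: you have supplied the argument that the paper merely points to.
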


More general versions of this theorem can be found for example in \cite{Fra}, where the functions $\gamma_1,\ldots,\gamma_m$ are allowed to be `logarithmico-exponential' functions of polynomial growth. See also \cite{FW}.

\subsection{ Weaker properties that follow from frequent universality}\label{ss.weakerproperties} Consider a sequence of operators $(T_n)_{n\in N}$ which is frequently universal. It is then immediate that $(T_n)_{n\in\mathbb N}$ satisfies the following property:

\begin{definition} Let $(T_n)_{n\in\mathbb N}$ be a sequence of operators. We will say that $(T_n)_{n\in\mathbb N}$ has property $\mathcal A$ if for every open set $U\subset X$ there exists a vector $x\in X$ such that 
	$$\dup(\{n\in\mathbb N: T_nx\in U\})>0.$$
If the sequence of operators $(T^n)_{n\in \mathbb N}$ has property $\mathcal A$ then we simply say that $T$ has property $\mathcal A$.
\end{definition}

For a single operator, property $\mathcal A$ has already been introduced in \cite[Proposition 4.6]{BADGR} where the authors show that if $T$ has property $\mathcal A$ and $T$ is hypercyclic then $T$ satisfies the hypercyclicity criterion. From \cite{BePe} this is equivalent to $T\oplus T$ being hypercyclic. Alternatively one could consider a notion of \emph{frequent recurrence} which seems to be relevant in this context:

\begin{definition} Let $(T_n)_{n\in\mathbb N}$ be a sequence of operators. A vector $x\in X$ is called a \emph{U-frequently recurrent vector for $(T_n)_{n\in\mathbb N}$} if for every open neighborhood $U_x$ of $x$, the set
	$$\{n\in\mathbb N: T_n x\in U_x\} $$
has positive upper density. The sequence $(T_n)_{n\in\mathbb N}$ is called \emph{U-frequently recurrent} if it has a dense set of U-frequently recurrent vectors. If the sequence of operators $(T^n)_{n\in \mathbb N}$ is U-frequently recurrent we will just say that $T$ is U-frequently recurrent.
\end{definition}

Observe that if a sequence of operators $(T_n)_{n\in\mathbb N}$ is U-frequently recurrent then it trivially satisfies property $\mathcal A$. It turns out that the hypothesis that $(\lambda_nT^n)_{n\in\mathbb N}$ is frequently universal in Theorem \ref{t.main1} can be replaced by the weaker hypothesis that  $(\lambda_nT^n)_{n\in\mathbb N}$ is U-frequently recurrent or by the even weaker hypothesis that $(\lambda_nT^n)_{n\in\mathbb N}$ satisfies property $\mathcal A$. We chose to state our main theorem in the introduction by using the more familiar notion of frequent universality since the definitions of frequent recurrence and that of property $\mathcal A$ are relatively new in the literature and not very well understood. However, it is relatively easy to see that there exist operators that are recurrent without being U-frequently recurrent; see the comment after Proposition \ref{p.nontopolshift}. We intend to take up these issues in a subsequent work.

\subsection{Property $\mathcal A$ implies topological multiple recurrence}\label{ss.Aimplies}

First we observe that Szemer\'edi's theorem immediately implies the following:

\begin{proposition} \label{p.simple}
If $T$ has property $\mathcal A$ then $T$ is topologically multiply recurrent. In particular, every U-frequently recurrent operator is topologically multiply recurrent.
\end{proposition}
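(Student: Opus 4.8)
The plan is to fix a non-empty open set $U$ and a positive integer $m$, and to directly produce a single point lying in the intersection $U\cap T^{-k}U\cap\cdots\cap T^{-mk}U$ for a suitable $k$. Since $T$ has property $\mathcal A$, I would first choose a vector $x\in X$ for which the set
$$ A\eqdef\{n\in\mathbb N: T^n x\in U\} $$
has positive upper density, $\dup(A)>0$. The whole strategy is then to feed this set $A$ into Szemer\'edi's theorem.

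Next I would apply Szemer\'edi's theorem (Theorem \ref{t.szem1}) to $A$: since $\dup(A)>0$, for the given $m$ there exist positive integers $a$ and $r$ such that the $m$-term arithmetic progression $a,a+r,a+2r,\ldots,a+mr$ is entirely contained in $A$. Writing $k=r$, this says precisely that $T^{a+jk}x\in U$ for every $j=0,1,\ldots,m$.

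The final step is the translation from the combinatorial conclusion back to the dynamical one. Setting $y=T^a x$, the case $j=0$ gives $y\in U$, while for each $j=1,\ldots,m$ we have $T^{jk}y=T^{a+jk}x\in U$, i.e.\ $y\in T^{-jk}U$. Hence
$$ y\in U\cap T^{-k}U\cap T^{-2k}U\cap\cdots\cap T^{-mk}U, $$
so this intersection is non-empty and $T$ is topologically multiply recurrent. The ``in particular'' assertion is then immediate: as observed just before the statement, every U-frequently recurrent operator satisfies property $\mathcal A$, so it falls under the conclusion just proved.

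There is essentially no analytic obstacle here; the entire content sits in Szemer\'edi's theorem, and the only thing to check is the bookkeeping that an arithmetic progression inside $A$ corresponds exactly to a common point of the iterated preimages of $U$. I would remark that the basic form (Theorem \ref{t.szem1}) already suffices for this proposition, since we only need a single progression; the strengthened form (Theorem \ref{t.szem2}), which supplies infinitely many base points $a$ for one common difference $k$, becomes necessary only later, when the weights $\lambda_n$ are present and one must absorb the perturbation coming from the ratios $|\lambda_n|/|\lambda_{n+\tau}|$.
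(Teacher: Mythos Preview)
Your proof is correct and follows essentially the same approach as the paper's own proof: pick $x$ so that $A=\{n:T^nx\in U\}$ has positive upper density, apply Szemer\'edi's theorem to extract an arithmetic progression $a,a+k,\ldots,a+mk$ in $A$, and observe that $T^a x$ lies in $U\cap T^{-k}U\cap\cdots\cap T^{-mk}U$. Your additional remark distinguishing the roles of Theorems \ref{t.szem1} and \ref{t.szem2} is accurate and a nice touch.
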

\begin{proof}
Take a non-empty open set $U$ and fix a positive integer $m$. Since $T$ has property $\mathcal A$ there exists a vector $x\in X$ such that the set
$$A=\{ n\in\mathbb N:T^nx \in U \}$$
has positive upper density. By Szemer\'edi's theorem there exist positive integers $a$, $k$ such that
$$T^ax, T^{a+k}x, T^{a+2k}x, \ldots ,T^{a+mk}x \in U.$$ From the last we get
$$T^ax\in U\cap T^{-k}U \cap \ldots \cap T^{-km}U,$$
that is $T$ is topologically multiply recurrent.
\end{proof}

For the proof of Theorem \ref{t.main1} we need an easy technical lemma that will allow us to reduce to the case that the sequence $(\lambda_n)_{n\in\mathbb N}$ consists of positive numbers.

\begin{lemma}\label{l.rotations} Let $(T_n)_{n\in\mathbb N}$ be a sequence of operators acting on $X$. Then the following are equivalent:
	\begin{list}{}{}
		\item [(i)] The sequence $(T_n)_{n\in\mathbb N}$ has property $\mathcal A$.
		\item [(ii)] For every sequence of real numbers $(\theta_n)_{n\in\mathbb N}$, the sequence $(e^{i\theta_n} T_n)_{n\in\mathbb N}$ satisfies property $\mathcal A$.
	\end{list}
\end{lemma}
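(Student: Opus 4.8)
The implication (ii)$\Rightarrow$(i) is immediate: taking $\theta_n=0$ for all $n$ recovers the original sequence $(T_n)_{n\in\mathbb N}$. The content is in (i)$\Rightarrow$(ii), so the plan is to fix an arbitrary sequence of reals $(\theta_n)_{n\in\mathbb N}$, assume $(T_n)_{n\in\mathbb N}$ has property $\mathcal A$, and verify property $\mathcal A$ for $(e^{i\theta_n}T_n)_{n\in\mathbb N}$. Since open balls form a basis, and since a vector whose images $e^{i\theta_n}T_n x$ land in a sub-ball of a given open set along a set of positive upper density also lands in the set itself along the same indices, it suffices to treat an arbitrary open ball $V=B(y,r)$.

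First I would apply property $\mathcal A$ of $(T_n)_{n\in\mathbb N}$ to the ball $B(y,r/2)$, obtaining a vector $x_0$ for which $A=\{n\in\mathbb N: T_n x_0\in B(y,r/2)\}$ has positive upper density. The difficulty is that property $\mathcal A$ gives no control over which indices $n$ lie in $A$, while the rotations $e^{i\theta_n}$ are prescribed; there is no reason for the phases $\theta_n$, $n\in A$, to be aligned. To handle this I would partition $[0,2\pi)$ into finitely many arcs $I_1,\dots,I_M$, short enough that $|e^{i\phi}-e^{i\phi'}|\,(\|y\|+r/2)<r/2$ whenever $\phi,\phi'$ lie in a common arc, and split $A=\bigsqcup_j N_j$ according to the arc containing $\theta_n \bmod 2\pi$. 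Since upper density is finitely subadditive and $\dup(A)>0$, at least one piece $N_{j_0}$ has $\dup(N_{j_0})>0$; let $\psi$ denote the midpoint of $I_{j_0}$.

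For $n\in N_{j_0}$ the estimates $\|e^{i\theta_n}T_n x_0-e^{i\psi}T_n x_0\|=|e^{i\theta_n}-e^{i\psi}|\,\|T_n x_0\|<r/2$ (using $\|T_n x_0\|<\|y\|+r/2$) and $\|e^{i\psi}T_n x_0-e^{i\psi}y\|=\|T_n x_0-y\|<r/2$ combine by the triangle inequality to give $e^{i\theta_n}T_n x_0\in B(e^{i\psi}y,r)$. Thus $(e^{i\theta_n}T_n)_{n\in\mathbb N}$ has the required frequency along $N_{j_0}$, but for the rotated ball $B(e^{i\psi}y,r)$ rather than for $V$ itself. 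The final and decisive step removes this uncontrolled phase: since $T_n$ is linear, $e^{i\theta_n}T_n(e^{-i\psi}x_0)=e^{-i\psi}\,e^{i\theta_n}T_n x_0$, and multiplication by the unimodular scalar $e^{-i\psi}$ is an isometry carrying $B(e^{i\psi}y,r)$ exactly onto $B(y,r)=V$. Hence $x_1=e^{-i\psi}x_0$ satisfies $\dup(\{n\in\mathbb N: e^{i\theta_n}T_n x_1\in V\})\ge\dup(N_{j_0})>0$, as desired. The main obstacle is precisely the mismatch between the arbitrary hitting set delivered by property $\mathcal A$ and the prescribed phases $\theta_n$; the pigeonhole over angle-arcs together with the closing rotation of the witnessing vector is exactly what resolves it.
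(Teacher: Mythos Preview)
Your proof is correct and follows essentially the same approach as the paper's: apply property $\mathcal A$ to a half-radius ball, pigeonhole the resulting index set over finitely many short arcs on $\mathbb T$ using finite subadditivity of upper density, and then absorb the uncontrolled representative phase by rotating the witnessing vector. The only cosmetic difference is that the paper defines $z=e^{-i\theta_o}x$ at the outset and estimates $\|e^{i\theta_n}T_n z-y\|$ directly, whereas you first land in $B(e^{i\psi}y,r)$ and then rotate; the underlying computation is identical.
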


\begin{proof} Obviously it is enough to prove that \textit{(i)} implies \textit{(ii)}. Let $U$ be any open set in $X$ and assume that $B(y,\epsilon)\subset U$. Since $(T_n)_{n\in\mathbb N}$ has property $\mathcal A$ there exists $x\in X$ such that the set
	\begin{align*}
		A\eqdef \{ n\in\mathbb N: T_n x\in B(y,\epsilon/2)\},
	\end{align*}
has positive upper density. Now let us write $\mathbb T= \cup_{\nu=1} ^M I_\nu$, where the arcs $I_1,\ldots,I_M$ are pairwise disjoint and each has length less than $\epsilon/(2\|y\|+\epsilon)$. We define the sets $J_\nu \eqdef \{n\in\mathbb N: e^{i\theta_n}\in I_\nu\}$. Since
\begin{align*}
	0<\dup(A)\leq \sum_{\nu=1} ^M \dup(A\cap J_\nu),
\end{align*}
we must have $\dup(A\cap J_{\nu_o})>0$ for some $\nu_o\in\{1,2,\ldots,M\}$. If $e^{i\theta_o}$ is the center of $J_{\nu_o}$ we set $z\eqdef e^{-i\theta_o}x$ and
\begin{align*}
	B\eqdef\{n\in\mathbb N: e^{i\theta_n}T_n z\in B(y,\epsilon) \}.
\end{align*}
Let $n\in A\cap J_{\nu_o}$. We have that 
\begin{align*}
	\| e^{i\theta_n}T_nz-y\|& \leq |e^{i\theta_o} - e^{i\theta_n} | \|T_n x \| + \| T_n x -y\|<\epsilon,
\end{align*}
which proves that $ A\cap J_{\nu_o} \subset B$. Thus $\dup(B)\geq \dup(A\cap J_{\nu_o})>0$ which obviously implies that the set $\{n\in\mathbb N: e^{i\theta_n} T_n z \in U\}$ has positive upper density.	
\end{proof}

Theorem \ref{t.main1} is an immediate consequence of the following:

\begin{theorem} \label{t.secondary}
Let $(\lambda_n)_{n\in\mathbb N}$ be a sequence of complex numbers such that
$$\lim_{n\to \infty}\frac{|\lambda_n|} {|\lambda_{n+\tau}|}=1,$$
for some positive integer $\tau$. Assume that the sequence $(\lambda_n T^n)_{n\in\mathbb N}$ has property $\mathcal A$. Then $T$ is topologically multiply recurrent.
\end{theorem}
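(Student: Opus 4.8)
The plan is to strip off the arguments of the weights, and then to feed a single residue class of the good return set into the strengthened Szemer\'edi theorem so as to force the common difference to be divisible by $\tau$. First I would dispose of the arguments of the $\lambda_n$: writing $\lambda_n=\abs{\lambda_n}e^{i\theta_n}$, Lemma \ref{l.rotations} (applied to the sequence $(\lambda_n T^n)_{n\in\mathbb N}$ with the rotations $(-\theta_n)_{n\in\mathbb N}$) shows that $(\lambda_n T^n)_{n\in\mathbb N}$ has property $\mathcal A$ if and only if $(\abs{\lambda_n}T^n)_{n\in\mathbb N}$ does, so I may assume from the outset that every $\lambda_n$ is a positive real number. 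Note that the hypothesis $\lambda_n/\lambda_{n+\tau}\to1$ forces $\lambda_n\neq0$ for all large $n$, which is all I shall use.

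Next, fix a non-empty open set $U$ and a positive integer $m$, and choose $y$ and $\epsilon>0$ with $B(y,\epsilon)\subset U$. Applying property $\mathcal A$ to the smaller ball $B(y,\epsilon/2)$ produces $x\in X$ for which $A\eqdef\{n\in\mathbb N:\lambda_n T^n x\in B(y,\epsilon/2)\}$ satisfies $\dup(A)>0$. The crucial point is to locate an arithmetic progression inside $A$ whose common difference is a multiple of $\tau$. To this end I would split $A$ according to residues modulo $\tau$; since $\dup(A)\le\sum_{\rho=0}^{\tau-1}\dup(A\cap\{n\in\mathbb N:n\equiv\rho\pmod{\tau}\})$, at least one residue class $A_{\rho_0}\eqdef A\cap\{n\in\mathbb N:n\equiv\rho_0\pmod{\tau}\}$ has positive upper density. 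Applying the strengthened Szemer\'edi theorem (Theorem \ref{t.szem2}) to $A_{\rho_0}$ yields a fixed $k$ for which $\textnormal{AP}(k)=\{a\in\mathbb N:a,a+k,\dots,a+mk\in A_{\rho_0}\}$ is infinite; because any two elements of $A_{\rho_0}$ are congruent modulo $\tau$, this $k$ is automatically divisible by $\tau$.

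With $k=\tau s$ in hand, the ratio hypothesis pays off. For each fixed $j\in\{0,1,\dots,m\}$ the telescoping identity $\lambda_a/\lambda_{a+jk}=\prod_{i=1}^{js}\lambda_{a+(i-1)\tau}/\lambda_{a+i\tau}$ has a bounded number of factors, each tending to $1$ as $a\to\infty$, so $\lambda_a/\lambda_{a+jk}\to1$ as $a\to\infty$, uniformly over the finitely many $j$. I would then pick $a\in\textnormal{AP}(k)$ large enough that $\abs{\lambda_a/\lambda_{a+jk}-1}<(\epsilon/2)/(\|y\|+\epsilon/2)$ for all $j$, and set $w\eqdef\lambda_a T^a x$. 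Since $a+jk\in A$, each $u_j\eqdef\lambda_{a+jk}T^{a+jk}x$ lies in $B(y,\epsilon/2)$, so $\|u_j\|<\|y\|+\epsilon/2$; writing $T^{jk}w=(\lambda_a/\lambda_{a+jk})u_j$, the triangle inequality gives $\|T^{jk}w-y\|\le\abs{\lambda_a/\lambda_{a+jk}-1}\,\|u_j\|+\|u_j-y\|<\epsilon$, so that $T^{jk}w\in U$ for every $j=0,1,\dots,m$. Hence $w\in U\cap T^{-k}U\cap\dots\cap T^{-mk}U$, which is therefore non-empty, and $T$ is topologically multiply recurrent.

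The step I expect to be the main obstacle, and the heart of the argument, is arranging that $\tau$ divides $k$: without it the weights $\lambda_{a+jk}$ need not be comparable to $\lambda_a$, and the candidate point $w=\lambda_a T^a x$ would fail to return near $U$. Passing to a single residue class before invoking Szemer\'edi is precisely what synchronizes the common difference with the period $\tau$ of the ratio hypothesis; everything else reduces to a routine density count and a triangle-inequality estimate.
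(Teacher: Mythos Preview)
Your proof is correct and follows essentially the same route as the paper's: reduce to positive $\lambda_n$ via Lemma~\ref{l.rotations}, find a set of positive upper density of good returns to $B(y,\epsilon/2)$, use the strengthened Szemer\'edi theorem to produce infinitely many $(m+1)$-term progressions with a fixed common difference divisible by $\tau$, and then let $a\to\infty$ so that the ratios $\lambda_a/\lambda_{a+jk}$ can be made as close to $1$ as needed.

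The only (minor) difference is how the divisibility $\tau\mid k$ is arranged. The paper does not pass to a residue class: it simply applies Theorem~\ref{t.szem2} with $m$ replaced by $m\tau$, so that for some $k$ there are infinitely many $a$ with $a,a+k,\dots,a+m\tau k\in F$; the subprogression $a,a+\tau k,\dots,a+m\tau k$ then has common difference $\tau k$. Your device of first intersecting with a residue class $A_{\rho_0}$ and then invoking Theorem~\ref{t.szem2} is an equally valid way to force $\tau\mid k$, and makes this point more explicit; the paper's version is slightly shorter. Either way the remaining estimate is the same telescoping-plus-triangle-inequality computation you carry out.
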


\begin{proof} We begin by fixing a sequence $(\lambda_n)_{n\in\mathbb N}$ which satisfies the condition in the statement of the theorem as well as an operator $T$ such that the sequence $(\lambda_n T^n)_{n\in\mathbb N}$ has property $\mathcal A$. By Lemma \ref{l.rotations} this is equivalent to the sequence $(|\lambda_n| T^n)_{n\in\mathbb N}$ having property $\mathcal A$. We can and will therefore assume that $(\lambda_n)_{n\in\mathbb N}$ is a sequence of positive numbers such that
	\begin{align}\label{e.rate}
	 \lim_{n\to+\infty} \frac{\lambda_n}{\lambda_{n+\tau}}=1,
	\end{align}
and that the sequence $(\lambda_n T^n)_{n\in\mathbb N}$ has property $\mathcal A$. In order to show that $T$ is topologically multiply recurrent we fix a non-empty open set  $U$ in $X$ and a positive integer $m$ and we need to find a vector $u\in U$ and a positive integer $\ell$ with  
$$ T^\ell u, T^{2\ell}u,\ldots,T^{m\ell}u \in U.$$ 
Since $U$ is open there is a $y\in U$ and a positive number $\epsilon >0$ such that $B(y,\epsilon )\subset U$. Since $(\lambda_n T^n)_{n\in\mathbb N}$ has property $\mathcal A$ there exists some $x\in X$ such that the set 
\begin{align*}
	F\eqdef \{n\in \mathbb N: \lambda_n T^n x \in B(y,\epsilon/2)\},
\end{align*}
has positive upper density. For $k\in\mathbb N$ we define the set
\begin{align*}
	\textnormal{AP}(k)\eqdef \{a\in\mathbb N: a,a+\tau k,a+2\tau k,\ldots,a+m\tau k \in F\}.
\end{align*}
By Szemer\'edi's theorem, Theorem \ref{t.szem2}, there exists a $k\in\mathbb N$ such that the set $\textnormal{AP}(k)$ is infinite. We fix such a $k$ so that 
$$a,a+\tau k,a+2\tau k,\ldots,a+m\tau k \in F$$
for all $a\in \textnormal{AP}(k)$. Thus the vectors 
\begin{align*}
	u\eqdef \lambda_{a} T^{a}x, u_j\eqdef \lambda_{a+j\tau k}T^{a+j\tau k} =\frac{\lambda_{a+j\tau k}}{\lambda_a}T^{j\tau k}u,\quad j=1,\ldots,m,
\end{align*}
belong to $B(y,\epsilon/2)$ for all $a\in \textnormal{AP}(k)$. Now for every $j\in \{ 1,2,\ldots,m\}$ we have that
\begin{align*}
\| T^{j\tau k}u-u_j \| &=\bigg\| \frac{\lambda_{a}}{\lambda_{a+j\tau k}} u_j-u_j\bigg\|=\ABs{ \frac{\lambda_{a}}{\lambda_{a+j \tau k}} -1} \|u_j\|,
\end{align*}
and, since $\lim_{n\to+\infty} \frac{\lambda_n}{\lambda_{n+\tau}}=1$, we also have that $\lim_{a\to+\infty}\frac{\lambda_a}{\lambda_{a+j\tau k}}=1$. Thus by choosing $a$ large enough in $\textnormal{AP}(k)$ we get that
\begin{align*}
\| T^{j\tau k}u-u_j \|< \frac{\epsilon}{2},
\end{align*}
for every $j=1,\ldots,m$. Since
$$\| u_j-y\| <\frac{\epsilon}{2},$$
we conclude that
$$\| T^{j\tau k}u-y\|<\epsilon $$
for every $j=1,\ldots,m$. Therefore, with $\ell\eqdef \tau k$, we have that
$$T^{j\ell} u \in  U \quad \textrm{for every} \quad j\in\{0,1,2,\ldots ,m\},$$
as we wanted to show.
\end{proof}
Some remarks are in order.

\begin{remark} When submitted, the original manuscript contained a much more restricted version of Theorem \ref{t.secondary} which only dealt with the case $\lambda_n=1/n$. The proof, albeit not so different from the one presented here, was unnecessarily complicated since it didn't use the full force of Szemer\'edi's theorem. Even with the simple form of Szemer\'edi's theorem, Theorem \ref{t.szem1}, our original proof was substantially improved by the careful reading and suggestions of the anonymous referee. The present version of the argument, used in the proof of Theorem \ref{t.secondary}, was discovered by the authors while the paper was under review and eventually replaced the original one.  
\end{remark}

\begin{remark} The notion of hypercyclicity generalizes to sequences of operators as follows. A sequence of operators $(T_n)_{n\in\mathbb N}$ is called \emph{universal} if there exists a vector $x\in X$ such that the set $\{ T_n x:n\in\mathbb N\}$ is dense in $X$. It is not hard to see that Theorem \ref{t.main1} fails if the hypothesis that $(\lambda_n T^n)_{n\in\mathbb N}$ is frequently universal is replaced by the hypothesis that $(\lambda_n T^n)_{n\in\mathbb N}$ is universal. 
	
	Indeed, let $T$ be the bilateral weighted shift acting on $l^2(\mathbb{Z})$ with weight sequence $w_n=1$ if $n\leq 0$ and $w_n=2$ if $n\geq 1$. In \cite[Example 3.6]{Leon} Le\'on-Saavedra proved that $( \frac{1}{n}T^n)_{n\in \mathbb N}$ is universal and $T$ is not hypercyclic. Assume that $T$ is recurrent. Then there exists a non-zero vector $x\in l^2(\mathbb{Z})$ which is a limit point of its orbit under $T$. From the results in \cite{ChSe} it follows that $T$ is hypercyclic, a contradiction. Thus $T$ is not recurrent. For the definition of a bilateral weighted shift on $l^2(\mathbb{Z})$ see Section \ref{s.shifts}.
\end{remark}

\begin{remark}The hypothesis that
	$$\lim_{n\to \infty}\frac{|\lambda_{n}|}{|\lambda_{n+\tau}|}=1$$
in Theorems \ref{t.secondary}, \ref{t.main1} cannot be replaced by the hypothesis
	$$\lim_{\stackrel{n\to \infty}{n\in A}}\frac{|\lambda_{n}|}{|\lambda_{n+\tau}|}=1,$$
for some set $A\subset \mathbb N$ which has positive density $0<\dlo(A)=\dup(A)\leq 1$, as the following examples show.
\end{remark}

\begin{example}
	Consider the set $A=\{ 2n: n=1,2,\ldots \}$ and define the sequence $ (\lambda_n )_{n\in\mathbb N}$ by $\lambda_{2n}=2^n$ for $n=1,2,\ldots $ and $\lambda_{2n+1}=2^n$ for $n=0,1,2,\ldots $. Then we have $\lim_{n\in A, n\to +\infty}\frac{\lambda_n}{\lambda_{n+1}}=1$ and $\dlo(A)=\dup(A)=1/2$. We shall prove that the sequence $(\lambda_nB^n)_{n\in\mathbb N}$ is frequently universal, where $B$ is the unweighted unilateral backward shift acting on $l^2(\mathbb{N})$. Indeed, since $\lambda_{2n}B^{2n}=(2B^2)^n$ it follows, by \cite{BaGr2}, that $2B^2$ is frequently hypercyclic and therefore the sequence $( \lambda_{2n}B^{2n} )_{n\in\mathbb N}$ is frequently universal. An easy argument now shows that the sequence $ (\lambda_nB^n )_{n\in\mathbb N}$ is frequently universal. Hence $ (\lambda_nB^n )_{n\in\mathbb N}$ has property $\mathcal A$ and on the other hand $B$ is not recurrent.  
\end{example}

\begin{example}
We now present a stronger example than the previous one, in the following sense. There exist a sequence $(\lambda_n )_{n\in\mathbb N}$ of positive integers, a subset $A$ of $\mathbb{N}$ with $\dlo(A)=\dup(A)=1$ and an operator $T$ acting on $l^2(\mathbb{N})$ such that $\lim_{n\in A, n\to +\infty}\frac{\lambda_n}{\lambda_{n+1}}=1$, the sequence $(\lambda_nT^n )$ is frequently universal but $T$ is not topologically multiply recurrent. Define $\lambda_n=2^{2^k}$ if $n\in [2^{k-1}, 2^k)$. It can be easily checked that the set $A\eqdef \cup_{k=2} ^{+\infty} ([2^{k-1}, 2^k-2]\cap \mathbb{N})$ has density $1$ and $\lim_{n\in A, n\to +\infty}\frac{\lambda_n}{\lambda_{n+1}}=1$. Using the frequent universality criterion from \cite{BG} and \cite{BonGroE} it is not difficult to show that $(\lambda_nB^n)_{n\in\mathbb N}$ is frequently universal, where $B$ is the unweighted unilateral backward shift. Clearly $B$ is not recurrent.       
\end{example}

\subsection{Polynomial multiple recurrence}\label{s.multrecur}
 It is immediate from the proof of Theorem \ref{t.secondary} and from the discussion on the different forms of Szemer\'edi's theorem that a stronger version of Theorem \ref{t.secondary} should hold true. This is indeed the case:

\begin{theorem} Let $(\lambda_n)_{n\in\mathbb N}$ be a sequence of non-zero complex numbers which satisfies
\begin{align*}
	\lim_{n\to+\infty}\frac{|\lambda_n|}{|\lambda_{n+\tau}|}=1,
\end{align*}
for some positive integer $\tau$. Let $T$ be an operator acting on $X$ such that the sequence $(\lambda_n T^n)_{n\in\mathbb N}$ has property $\mathcal A$. Let $U$ be an open set in $X$ and $m$ a positive integer. Let $p_1,p_2,\ldots,p_m$ be polynomials with rational coefficients taking positive integer values on the positive integers and satisfying $p_j(0)=0$ for all $j=1,2,\ldots,m$. Then there is a positive integer $k$ such that
	$$ U \cap T^{-p_1(k)}(U)\cap\cdots\cap T^{-p_m(k)}(U)\neq \emptyset.$$
\end{theorem}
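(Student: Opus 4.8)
The plan is to follow the proof of Theorem \ref{t.secondary} almost verbatim, replacing the strengthened Szemer\'edi theorem (Theorem \ref{t.szem2}) by its polynomial counterpart (Theorem \ref{t.polszem}). First I would invoke Lemma \ref{l.rotations} to reduce to the case where $(\lambda_n)_{n\in\mathbb N}$ is a sequence of positive reals with $\lim_n \lambda_n/\lambda_{n+\tau}=1$; note that this gives $\lim_n \lambda_n/\lambda_{n+s\tau}=1$ for every fixed positive integer $s$, by writing that ratio as a telescoping product of $s$ factors each tending to $1$. Given the open set $U$ and the integer $m$, I would fix $y$ and $\epsilon>0$ with $B(y,\epsilon)\subset U$, and use property $\mathcal A$ to produce $x\in X$ such that $F\eqdef\{n\in\mathbb N:\lambda_n T^n x\in B(y,\epsilon/2)\}$ has positive upper density.

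The one genuinely new point is that the ratio estimate at the end of the argument needs the gaps between consecutive terms of the pattern to be multiples of $\tau$: indeed $\lim_a \lambda_a/\lambda_{a+d}=1$ is guaranteed by the hypothesis only when $\tau\mid d$, and in general it can fail otherwise (for instance with $\tau=2$, $\lambda_{2k}=k$, $\lambda_{2k+1}=k^2$). To force the gaps $p_j(k)$ to be divisible by $\tau$, I would first choose a common period $L$ of the $m$ purely periodic sequences $n\mapsto p_j(n)\bmod\tau$, $j=1,\dots,m$; since $p_j(0)=0$, periodicity yields $p_j(Lk')\equiv p_j(0)\equiv 0\pmod\tau$ for every $k'\in\mathbb N$. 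I would then apply Theorem \ref{t.polszem} to the set $F$ and to the polynomials $\tilde p_j(k')\eqdef p_j(Lk')$, which again have rational coefficients, are integer valued, satisfy $\tilde p_j(0)=0$, and take positive values on $\mathbb N$. This produces a $k'\in\mathbb N$ with $\tilde p_j(k')\neq 0$ for all $j$ and with $\textnormal{AP}^{pol}(k')\eqdef\{a\in\mathbb N: a, a+\tilde p_1(k'),\dots,a+\tilde p_m(k')\in F\}$ infinite, and by construction each gap $\tilde p_j(k')=p_j(Lk')$ is a multiple of $\tau$.

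With this in hand the conclusion follows exactly as in Theorem \ref{t.secondary}. Choosing $a$ large in $\textnormal{AP}^{pol}(k')$, I would set $u\eqdef\lambda_a T^a x$ and $u_j\eqdef\lambda_{a+\tilde p_j(k')}T^{a+\tilde p_j(k')}x=\frac{\lambda_{a+\tilde p_j(k')}}{\lambda_a}T^{\tilde p_j(k')}u$, so that $u,u_1,\dots,u_m\in B(y,\epsilon/2)$. Since the gaps are multiples of $\tau$ we have $\lambda_a/\lambda_{a+\tilde p_j(k')}\to 1$ as $a\to\infty$, whence $\|T^{\tilde p_j(k')}u-u_j\|<\epsilon/2$ for $a$ large enough and therefore $\|T^{\tilde p_j(k')}u-y\|<\epsilon$. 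Thus, with $k\eqdef Lk'$, the vector $u$ lies in $U\cap T^{-p_1(k)}(U)\cap\cdots\cap T^{-p_m(k)}(U)$, proving the claim. The only real obstacle is the divisibility bookkeeping just described; everything else is a transcription of the earlier proof, and for the special case $\tau=1$ (which covers the motivating examples such as $\lambda_n=1/n$) it disappears entirely, since then every integer is a multiple of $\tau$ and one may apply Theorem \ref{t.polszem} directly to the $p_j$.
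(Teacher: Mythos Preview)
Your proposal is correct and matches the paper's approach: the paper does not write out a proof at all, saying only that it ``is just a repetition of the proof of Theorem \ref{t.secondary} where one uses the polynomial Szemer\'edi theorem, Theorem \ref{t.polszem}, instead of Theorem \ref{t.szem2}.'' You carry out precisely this repetition.

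In fact you are more careful than the paper on one point. In the proof of Theorem \ref{t.secondary} the gaps $j\tau k$ are visibly multiples of $\tau$, which is what makes $\lambda_a/\lambda_{a+j\tau k}\to 1$ follow from the hypothesis. For general polynomials $p_j$ the gaps $p_j(k)$ need not be multiples of $\tau$, and your substitution $k=Lk'$ with $L$ a common period of the sequences $n\mapsto p_j(n)\bmod\tau$ (which exist since the $p_j$ are integer-valued polynomials, hence periodic modulo any fixed integer) is exactly the right fix. The paper's one-line remark does not mention this wrinkle, so your write-up actually fills a small gap in the exposition; for $\tau=1$ the issue of course disappears, as you note.
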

The proof of this theorem is omitted as it is just a repetition of the proof of Theorem \ref{t.secondary} where one uses the polynomial Szemer\'edi theorem, Theorem \ref{t.polszem}, instead of Theorem \ref{t.szem2}.

\section{Good and bad sequences}\label{s.bad}
In this section we give examples of sequences for which our main theorem is valid as well as examples that exhibit that if the limit
\begin{equation}\label{e.limit}
	\lim_{n\to+\infty}|\lambda_n|/|\lambda_{n+\tau}|
\end{equation}
exists but is different than $1$, then Theorem \ref{t.main1} fails in general. 

We shall say that a sequence $(\lambda_n)_{n\in\mathbb N} $ of complex numbers is \emph{good} if Theorem \ref{t.secondary} holds true for this sequence; that is, the sequence $(\lambda_n)_{n\in\mathbb N}$ is \emph{good} if, for any bounded linear operator $T$ acting on $X$, the operator $T$ is topologically multiply recurrent whenever the sequence $ (\lambda_n T^n )_{n\in\mathbb N}$ has property $\mathcal A$. Otherwise we will say that $(\lambda_n)_{n\in\mathbb N}$ is \emph{bad}. 

\subsection{Bad sequences}\label{ss.bad} Here we give the proof of Proposition \ref{p.bad}.

\begin{proof}[Proof of Proposition \ref{p.bad}] A moment's reflection shows that it is enough to consider the case $\tau=1$. We first assume that $a$ is a complex number with $0<|a|<1$ and let $B:l^2(\mathbb N)\to l^2(\mathbb N)$ be the unweighted backward shift, that is 
	$$ B(w_1,w_2,\ldots)=(w_2,w_3,\ldots),$$	
for all sequences $(w_1,w_2,\ldots)\in l^2(\mathbb{N})$. We define $w\eqdef a^{-\frac{1}{2}}$ and consider the sequence $\lambda_n=w^{2n}$. We have that $\lambda_{n}/\lambda_{n+1}=w^{-2}=a$. Observe that 
$$|w|^2=|a|^{-1}>1\implies |w|>1$$ 
so we can define the operator $T\eqdef \frac{1}{w}B$ and we have $\|T\|<1$. On the other hand, since $|w|>1$ a result from \cite{BaGr2} shows that $wB$ is frequently hypercyclic which is equivalent to saying that the sequence $(w^n B^n)_{n\in \mathbb N}$ is frequently universal. Observe that we have
$$w^n B^n=w^{2n}\frac{1}{w^n}B^n =\lambda_n T^n .$$
That is, the sequence $(\lambda_nT^n)_{n\in \mathbb N}$ is frequently universal. However $T$ is not recurrent since $\|T\|<1$. 

Let now $a\in\mathbb C$ with $|a|>1$. Let $S=\{re^{i\theta}\in\mathbb C:1<r<2|a|,0<\theta<\pi/2\}$ and $\phi:\mathbb D \to S$ be the Riemann map of $\mathbb D$ onto $S$. Consider the sequence $\lambda_n= a ^{-n}$. Let $\mathbb H^2(\mathbb D)$ be the Hardy space on the unit disc and $M_\phi$ be the multiplication operator on $\mathbb H^2(\mathbb D)$, for the function $\phi$ just defined. It is well known that since $\phi(\mathbb D)\cap  \mathbb T=\emptyset$, the adjoint of the multiplication operator, $M^* _\phi$ is \emph{not} hypercyclic \cite{GoSh}. Moreover, by Proposition \ref{p.adjequiv} we know that $M^* _\phi$ is not even recurrent in this case. On the other hand it is easy to see that
$$\lambda_n (M^* _\phi)^n=(a^{-1} M_\phi ^*)^n = (M^* _{\phi/\bar a})^n=(M^* _\psi)^n,$$
where $\psi=\phi/\bar a$. Since $\psi(\mathbb D)\cap  \mathbb T\neq \emptyset$ we conclude that $M^* _\psi$ is frequently hypercyclic. For this see \cite{BaGr2} or Proposition \ref{p.adjequiv} of the present paper. However, this means that the sequence $\lambda_n (M^* _\phi)^n$ is frequently universal. Since $M^* _\phi$ is not recurrent, thus not topologically multiply recurrent, this completes the proof in this case as well. Actually the same argument works also for the case $|a|<1$. For this just consider the Riemann map of $\mathbb D$ onto $S'=\{re^{i\theta}\in\mathbb C: |a|/2<r<1, 0<\theta<\pi/2\}$.

Let us now move to the case $a=0$. We set $\lambda_n=n!$ so $\lambda_n/\lambda_{n+1}=1/(n+1)\to 0 $ as $n\to +\infty$. Now let $B$ be the backward shift on $l^2(\mathbb N)$ as before. It is not hard to see that the sequence $\lambda_n B^n$ satisfies the frequent universality criterion from \cite{BG} and \cite{BonGroE}. We conclude that $\lambda_n B^n$ is frequently universal. However $B$ is not recurrent since $\|B^nx\|\to 0$ as $n\to+\infty$ for all $x\in X$.
\end{proof}

\begin{remark} It is not hard to see that if $|\lambda_n|/|\lambda_{n+1}|\to +\infty$ and $T$ is any bounded linear operator then the sequence $(\lambda_nT^n)_{n\in\mathbb N}$ is never frequently universal. To see this let $T$ be any operator and fix some positive integer $n_o$ such that $|\lambda_n|/|\lambda_{n+1}|> 1+\|T\|$ for all $n\geq n_o$. Observe that for all $n\geq n_o$ we have
	\begin{align*}
	\bigg|\frac{\lambda_{n}}{\lambda_{n_o}}\bigg|=\bigg|\frac{\lambda_{n}}{\lambda_{n-1}}\cdots \frac{\lambda_{n_o+1}}{\lambda_{n_o}}\bigg|\leq (1+\|T\|)^{-(n-n_o)}.
	\end{align*}
Now for any $x\in X$ we conclude that
\begin{align*}
	\| \lambda_n T^n x \| \leq |\lambda_{n_o}|(1+\|T\|)^{n_o} \bigg(\frac{\|T\|}{1+\|T\|}\bigg)^n,
\end{align*}	
for all $n\geq n_o$. Letting $n\to +\infty$ we get that $\lim_{n\to+\infty}	\| \lambda_n T^n x \|= 0$ for all $x\in X$. Thus $(\lambda_nT^n)_{n\in \mathbb N}$ cannot even be recurrent in this case.
\end{remark}

\subsection{Good sequences}In this subsection we shall present several illustrating examples for which Theorem \ref{t.secondary} applies.

\begin{example}[Sub-Polynomial Growth]
Theorem \ref{t.secondary} implies that the following sequences, $(\log n)_{n\in\mathbb N}$, $( \log \log n)_{n\in\mathbb N}$, $( (\log n)^k)_{n\in\mathbb N}$, $k\in \mathbb{R}$, are good sequences.   
\end{example}

\begin{example} [Polynomial Growth]
If $Q$ is a non-zero rational complex-valued function then the sequence $(Q(n))_{n\in\mathbb N}$ is good. In particular if $P$ is any non-zero polynomial the sequence $ (P(n))_{n\in\mathbb N}$ is good. This is straightforward from Theorem \ref{t.secondary} since $Q(n)/Q(n+1)\to 1$ as $n\to +\infty$. 
\end{example}

\begin{example}[Super-polynomial Growth]\label{suppol}
For a real number $a$ the sequence $(e^{n^a})_{n\in\mathbb N}$ is good if and only if $a<1$. Indeed, if $a<1$ then $e^{n^a}/e^{(n+1)^a}\to 1$ as $n\to +\infty$ and by Theorem \ref{t.secondary} we conclude that the sequence $ (e^{n^a})_{n\in\mathbb N} $ is good. Observe that for $0<a<1$ the sequence $\{ e^{n^a} \}$ has super-polynomial growth. On the other hand for $a=1$, the sequence $ (e^{n}B^n)_{n\in\mathbb N}$ is frequently hypercyclic see \cite{BaGr2}, where $B$ is the unweighted unilateral backward shift acting on the space of square summable sequences, and clearly $B$ is not even recurrent. Therefore $(e^n)_{n\in\mathbb N}$ is a bad sequence and from the discussion in paragraph \ref{ss.bad} it follows that for every $a>1$ the sequence $(e^{n^a})_{n\in\mathbb N}$ is bad as well.     
\end{example}

\begin{example}
Let us see an example of a good sequence $( \lambda_n )_{n\in\mathbb N}$ which grows faster than every sequence $(e^{n^a})_{n\in\mathbb N}$ , $0<a<1$. Indeed, just take $\lambda_n=e^{\frac{n}{\log n}}$. Since $\lambda_n/\lambda_{n+1}\to 1$ it follows that $(e^{\frac{n}{\log n}})_{n\in\mathbb N}$ is a good sequence. Observe that the sequences $( e^{\frac{n}{\log \log n}})_{n\in\mathbb N}$,  $(e^{\frac{n}{\log \log \log n}})_{n\in\mathbb N}$, etc. are good sequences as well.  
\end{example}


\section{Weighted shifts} \label{s.shifts}
In this paragraph we give a characterization of topologically multiply recurrent bilateral weighted shifts in terms of their weight sequences.

Let $l^2(\mathbb{N})$ be the Hilbert space of square summable sequences $x=(x_n)_{n\in \mathbb{N}}$. Consider
the canonical basis $(e_n)_{n\in \mathbb{N}}$ of $l^2(\mathbb{N})$ and let $(w_n)_{n\in \mathbb{N}}$ be a
(bounded) sequence of positive numbers. The operator $T:l^2(\mathbb{N})\to l^2(\mathbb{N})$ is a
\emph{unilateral (backward) weighted shift} with weight sequence $(w_n)_{n\in\mathbb N}$ if $Te_{n}=w_ne_{n-1}$ for every $n\geq
1$ and $Te_1=0$.

Let $l^2(\mathbb{Z})$ be the Hilbert space of square summable sequences $x=(x_n)_{n\in \mathbb{Z}}$ endowed with
the usual $l^2$ norm. That is, $x=(x_n)_{n\in \mathbb{Z}}\in l^2(\mathbb{Z})$ if
$\sum_{n=-\infty}^{+\infty}|x_n|^2<+\infty $. Let $(w_n)_{n\in \mathbb{Z}}$ be a (bounded) sequence of positive
numbers. The operator $T:l^2(\mathbb{Z})\to l^2(\mathbb{Z})$ is a \emph{bilateral (backward) weighted shift}
with weight sequence $(w_n)_{n\in\mathbb Z}$ if $Te_{n}=w_ne_{n-1}$ for every $n\in \mathbb{Z}$. Here $(e_n)_{n\in \mathbb{Z}}$
is the canonical basis of $l^2(\mathbb{Z})$.

We begin by showing that for bilateral weighted shifts, hypercyclicity is equivalent to recurrence.
\begin{proposition}
Let $T:l^2(\mathbb{Z})\to l^2(\mathbb{Z})$ be a bilateral weighted shift. Then $T$ is hypercyclic if and only if
$T$ is recurrent.
\end{proposition}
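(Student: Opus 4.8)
The plan is to establish the nontrivial implication—that recurrence forces hypercyclicity—since the converse is immediate: as already noted, every hypercyclic operator is recurrent. For the forward direction I would match recurrence against the classical Salas characterization of hypercyclic bilateral weighted shifts: with the convention $Te_n=w_ne_{n-1}$, the operator $T$ is hypercyclic if and only if for every $q\in\mathbb N$ there is an increasing sequence $(n_k)_{k\in\mathbb N}$ of positive integers (depending on $q$) such that, for all integers $j$ with $|j|\le q$,
$$\prod_{i=1}^{n_k} w_{j+i}\longrightarrow +\infty \qquad\text{and}\qquad \prod_{i=0}^{n_k-1} w_{j-i}\longrightarrow 0,\qquad k\to\infty.$$
Thus the whole task reduces to producing, for each fixed $q$, a single sequence of integers along which both product conditions hold simultaneously for all $|j|\le q$.

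The source of such a sequence will be a recurrent vector. By Remark \ref{r.rec} the recurrent vectors of $T$ are dense, so given $q$ I would choose a recurrent vector $x=\sum_{n}x_ne_n$ with $\|x-\sum_{|j|\le q}e_j\|<1/2$; this guarantees $|x_j|>1/2$, and in particular $x_j\neq 0$, for every $|j|\le q$. Being recurrent, $x$ admits an increasing sequence $(n_k)$ with $T^{n_k}x\to x$. The key computation is the explicit action of the iterates: from $T^m e_n=\bigl(\prod_{i=0}^{m-1}w_{n-i}\bigr)e_{n-m}$ one reads off that the coefficient of $e_\ell$ in $T^m x$ equals $x_{\ell+m}\prod_{i=1}^{m} w_{\ell+i}$.

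From here both Salas conditions drop out of $T^{n_k}x\to x$. Inspecting the coefficient of $e_j$ for a fixed $|j|\le q$ gives $x_{j+n_k}\prod_{i=1}^{n_k}w_{j+i}\to x_j\neq 0$; since $x\in\ell^2$ forces $x_{j+n_k}\to 0$, the product $\prod_{i=1}^{n_k}w_{j+i}$ must blow up to $+\infty$. For the second condition I would instead track the mass that $T^{n_k}$ pushes to the far left: the coefficient of $e_{j-n_k}$ in $T^{n_k}x$ equals $x_j\prod_{i=0}^{n_k-1}w_{j-i}$, and since $\|T^{n_k}x-x\|\to 0$ this coefficient must stay within $o(1)$ of $x_{j-n_k}$, which again tends to $0$ because $j-n_k\to-\infty$; as $x_j\neq 0$ this yields $\prod_{i=0}^{n_k-1}w_{j-i}\to 0$. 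Since there are only finitely many indices $j$ with $|j|\le q$, the single sequence $(n_k)$ serves all of them at once, and letting $q$ vary verifies the Salas criterion.

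The main obstacle is precisely this last extraction step: one must read off two opposite-looking conclusions—products to the right blowing up and products to the left decaying—from the single hypothesis $T^{n_k}x\to x$, and the decay condition is the subtle one because it concerns coefficients at the \emph{moving} index $j-n_k$ rather than at a fixed coordinate. The point to get right is that $\ell^2$-convergence of $T^{n_k}x$ to $x$ controls every coordinate uniformly, so it may be invoked at the shifting index $j-n_k$, where the target coordinate $x_{j-n_k}$ of $x$ itself vanishes in the limit. Once this is handled carefully, the remainder is routine bookkeeping with the weight products.
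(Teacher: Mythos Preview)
Your proposal is correct and follows essentially the same route as the paper: both reduce to Salas' weight-product criterion by testing recurrence near the vector $\sum_{|j|\le q}e_j$ and reading off the two product conditions from the coordinates of an iterate. The only cosmetic difference is that the paper invokes the open-set definition of recurrence directly on the ball $B(\sum_{|j|\le q}e_j,\delta)$ to obtain a single $n$ for each pair $(q,\epsilon)$, whereas you first pass through the density of recurrent vectors (Remark~\ref{r.rec}) to pick a recurrent $x$ near $\sum_{|j|\le q}e_j$ and then extract a full sequence $(n_k)$; both arrive at the same Salas conditions.
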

\begin{proof}
Let $(w_n)_{n\in\mathbb Z}$ be the weight sequence of $T$. We only have to prove that if $T$ is recurrent then $T$ is hypercyclic
since the converse implication holds trivially. So assume $T$ is recurrent. Let $q$ be a positive integer and
consider $\epsilon >0$. Choose $\delta
>0$ such that $\delta /(1-\delta)<\epsilon $ and $\delta <1$. Consider the open ball $B(\sum_{|j|\leq q}e_j,\delta )$.
There exists a positive integer $n>2q$ such that
\[ B\bigg(\sum_{|j|\leq q}e_j,\delta \bigg)\bigcap T^{-n}\bigg( B\bigg(\sum_{|j|\leq q}e_j,\delta \bigg) \bigg)\neq \emptyset .\]
Hence there exists $x\in l^2(\mathbb{Z})$ such that
$$\bigg\| x-\sum_{|j|\leq q}e_j\bigg\| <\delta$$ and
$$ \bigg\| T^nx-\sum_{|j|\leq q}e_j\bigg\| <\delta .$$
Having at our hands the last inequalities we argue as in the proof of Theorem 2.1 in
\cite{Salas} and we conclude that for all $|j|\leq q$
$$ \prod_{s=1}^{n}w_{s+j}>\frac{1}{\epsilon}\quad \textrm{and} \quad \prod_{s=0}^{n-1}w_{j-s}< \epsilon .$$
The last conditions are known to be sufficient for $T$ to be hypercyclic; see \cite{Salas}.
\end{proof}

In the case of weighted bilateral shifts we provide a characterization of topological multiple recurrence in
terms of the weights. Another characterization can be given through the notion of $d$-hypercyclic operators
introduced by Bes and Peris in \cite{BePe2} and independently by Bernal-Gonzalez in \cite{Bernal}:
\begin{definition}
Let $N\geq 2$ be a positive integer. The operators $T_1,\ldots ,T_N$ acting on $X$ are called \emph{disjoint} or
\emph{diagonally hypercyclic} (in short \emph{$d$-hypercyclic}) if there exists a vector $x\in X$ such that
\[ \overline{ \{ (T_1^nx, \ldots ,T_N^nx): n=0,1,2,\ldots \} }=X^N.\] If the set of such vectors $x$ is dense in
$X$ then the operators $T_1,\ldots ,T_N$ are called \emph{densely $d$-hypercyclic}.
\end{definition}
\begin{proposition} \label{p.equiv}
Let $T:l^2(\mathbb{Z})\to l^2(\mathbb{Z})$ be a bilateral weighted shift with weight sequence $(w_n)_{n\in\mathbb Z}$. The
following are equivalent:
\begin{list}{}{}
\item [(i)] $T$ is topologically multiply recurrent.

\item [(ii)] For every $m\in \mathbb{N}$ the operator $T\oplus T^2\oplus \cdots \oplus T^m$ is hypercyclic on
$X^m$.

\item[(iii)] For every $m\in \mathbb{N}$ the operators $T,T^2,\ldots , T^m$ are densely $d$-hypercyclic.

\item[(iv)] For every $m,q\in \mathbb{N}$ and every $\epsilon >0$ there exists a positive integer
$n=n(m,q,\epsilon )$ such that for every integer $j$ with $|j|\leq q$ and every $l=1,\ldots ,m$ we have
\[ \prod_{i=1}^{ln}w_{j+i}>\frac{1}{\epsilon } \quad \mbox{and} \quad
\prod_{i=0}^{ln-1}w_{j-i}<\epsilon . \]
\end{list}
If in addition $T$ is invertible then any of the conditions \textit{(i)-(iv)} is equivalent to:
\begin{list} {}{}
\item [(v)] for every $m\in \mathbb{N}$ there exists a strictly increasing sequence of positive integers $\{
n_k \}$ such that
\[ \lim_{k\to +\infty}\prod_{i=1}^{ln_k}w_{i}=\lim_{k\to
+\infty}\prod_{i=0}^{ln_k}\frac{1}{w_{-i}}=+\infty, \] for every $l=1,\ldots ,m$.
\end{list}

\end{proposition}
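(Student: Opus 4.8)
The plan is to prove the cycle (i) $\Rightarrow$ (iv) $\Rightarrow$ (iii) $\Rightarrow$ (i) and to insert condition (ii) through the short implications (iii) $\Rightarrow$ (ii) and (ii) $\Rightarrow$ (iv); together these close all four equivalences. The arithmetic core is a Salas-type computation. For the test vector $a=\sum_{|j|\leq q}e_j$ one has the explicit formula $(T^{N}x)_s=x_{s+N}\prod_{i=1}^{N}w_{s+i}$, so that the statement ``$x$ and $T^N x$ both lie in $B(a,\delta)$'' translates, whenever $N>2q$ and $\delta/(1-\delta)<\epsilon$, into the two product estimates $\prod_{i=1}^{N}w_{j+i}>1/\epsilon$ and $\prod_{i=0}^{N-1}w_{j-i}<\epsilon$ for all $|j|\leq q$. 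This is exactly the mechanism already used in the proof that recurrence equals hypercyclicity for bilateral shifts, and I would invoke it verbatim.

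For (i) $\Rightarrow$ (iv): fixing $m,q,\epsilon$, I apply topological multiple recurrence to $U=B(a,\delta)$, but at multiplicity $mN_0$ rather than $m$, where $N_0>2q$ is chosen in advance. This produces $x\in U$ and a step $\kappa$ with $T^{i\kappa}x\in U$ for $0\leq i\leq mN_0$; setting $n:=N_0\kappa>2q$ gives $x,T^{n}x,\dots,T^{mn}x\in U$ with a genuinely large common step. Feeding the pairs $(x,T^{ln}x)$, $l=1,\dots,m$, into the Salas computation with $N=ln$ yields (iv). The role of the higher multiplicity is solely to force the step past $2q$; it replaces the ``$n>2q$'' clause that recurrence supplies in the single-operator case. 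For (ii) $\Rightarrow$ (iv): topological transitivity of $S_m=T\oplus T^2\oplus\dots\oplus T^m$ applied to the open set $B(a,\delta)^m\subset X^m$ yields, for arbitrarily large $n$, a point $(x_1,\dots,x_m)$ with $x_l,T^{ln}x_l\in B(a,\delta)$ for every $l$; the Salas computation in each coordinate again gives (iv), the crucial point being that transitivity delivers a single $n$ simultaneously valid for all $l$.

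The substantial step is (iv) $\Rightarrow$ (iii), which I would establish by verifying the disjoint hypercyclicity (blow-up/collapse) criterion. Given finitely supported data $f,h_1,\dots,h_m$ with supports in $[-q,q]$ and $\epsilon>0$, choose via (iv) a large $n$ with the two product bounds holding for a small parameter $\eta$, and set $z:=f+\sum_{l=1}^{m}z^{(l)}$, where $z^{(l)}$ is supported in $[ln-q,ln+q]$ and defined by $z^{(l)}_{s+ln}:=h_{l,s}/\prod_{i=1}^{ln}w_{s+i}$ for $|s|\leq q$. The bound $\prod_{i=1}^{ln}w_{s+i}>1/\eta$ makes each block small, so $z$ is close to $f$; by construction $T^{ln}z^{(l)}=h_l$ exactly; the image $T^{ln}f$ is small by the collapse bound $\prod_{i=0}^{ln-1}w_{j-i}<\eta$; and for $l'\neq l$ the cross term $T^{ln}z^{(l')}$ reduces, after telescoping the weight products, to a factor governed by $\prod_{i=1}^{|l-l'|n}w_{\sigma+i}$ (for $l'>l$) or $\prod_{i=0}^{|l-l'|n-1}w_{\sigma-i}$ (for $l'<l$), each controlled by (iv) since $|l-l'|\in\{1,\dots,m\}$. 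Hence $T^{ln}z\approx h_l$ for every $l$ with error $O(\eta)$. The main obstacle is precisely this interference analysis across the $m$ blocks: it works only because the differences $l-l'$ again lie in $\{1,\dots,m\}$ and the estimates of (iv) hold for all these values with one common $n$, which is the structural reason (iv) must be posed uniformly in $l$. The remaining implications are immediate: a dense set of $d$-hypercyclic vectors for $T,\dots,T^m$ gives (iii) $\Rightarrow$ (ii) (a diagonal $d$-hypercyclic vector is hypercyclic for $S_m$) and (iii) $\Rightarrow$ (i) (picking such a vector inside any $U$ and using that its $S_m$-orbit meets $U^m$ produces the multiple-recurrence return).

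Finally, for invertible $T$ I would prove (iv) $\Leftrightarrow$ (v) using that invertibility is equivalent to $0<\inf_n w_n\leq\sup_n w_n<\infty$. This boundedness lets one pass between the condition ``for all $|j|\leq q$'' and the condition ``$j=0$'': the ratio of $\prod_{i=1}^{ln}w_{j+i}$ to $\prod_{i=1}^{ln}w_{i}$ involves at most $q$ weights at each end and so stays between $(c/C)^{q}$ and $(C/c)^{q}$, and similarly for the collapse products. Thus (iv) is equivalent to its $j=0$ instance, and letting $\epsilon=1/k\to 0$ converts the two families of bounds into the divergences $\prod_{i=1}^{ln_k}w_i\to\infty$ and $\prod_{i=0}^{ln_k}w_{-i}^{-1}\to\infty$ required by (v), the off-by-one shift in the index being absorbed by $\inf_n w_n>0$; the converse direction reverses this, choosing $k$ large enough that the finitely many conditions ($l\leq m$, $|j|\leq q$) all hold. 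One should note here that (iv), being equivalent to (i), holds with arbitrarily large $n$, which is what permits the $n_k$ in (v) to be taken strictly increasing.
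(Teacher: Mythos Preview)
Your argument is correct and follows the same logical skeleton as the paper: the Salas-type coordinate computation for the implication from topological multiple recurrence to the weight condition, and the closing of the cycle through $d$-hypercyclicity. The chief difference is one of presentation rather than method. The paper proves only (i)~$\Rightarrow$~(iv) and (iii)~$\Rightarrow$~(i) explicitly, outsourcing the equivalence of (ii), (iii), (iv) to B\`es--Peris \cite{BePe2} and the equivalence with (v) in the invertible case to Feldman \cite{Fel}; you instead reproduce those arguments in full via the blow-up/collapse construction and the bounded-weights reduction. Your proof is thus more self-contained, and on one point more careful: the paper simply asserts that the return time in (i)~$\Rightarrow$~(iv) can be taken larger than $2q$, whereas your device of applying multiple recurrence at multiplicity $mN_0$ with $N_0>2q$ makes this explicit. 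One small point worth recording in your write-up of (iv)~$\Rightarrow$~(iii): you need the step $n$ from (iv) to exceed $2q$ so that the blocks $[ln-q,ln+q]$ are disjoint, and at that stage you cannot yet invoke (i); the clean way to force this is to note that $\prod_{i=1}^{n}w_i>1/\eta$ together with $w_i\le\|T\|$ gives $n>\log(1/\eta)/\log\|T\|$, so choosing $\eta$ small enough does the job (condition (iv) with $\epsilon<1$ already forces $\|T\|>1$).
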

\begin{proof}
Let us first prove that \textit{(i)} implies \textit{(iv)}. Fix a positive integer $m$ and let $\epsilon >0$. Take also a
positive integer $q$. Then consider a positive number $\delta $ such that $\delta /(1-\delta)<\epsilon $ and
$\delta <1$. Consider the open ball $B(\sum_{|j|\leq q}e_j,\delta )$. There exists a positive integer $n>2q$
such that
\[ B\bigg(\sum_{|j|\leq q}e_j,\delta \bigg)\bigcap
T^{-n}\bigg( B\bigg(\sum_{|j|\leq q}e_j,\delta \bigg) \bigg) \bigcap \cdots \bigcap T^{-mn}\bigg(
B\bigg(\sum_{|j|\leq q}e_j,\delta \bigg) \bigg) \neq \emptyset .\] 
Hence there exists $x=(x_k)_{k\in \mathbb{Z}}
\in l^2(\mathbb{Z})$ such that
$$\bigg\| T^{ln}x-\sum_{|j|\leq q}e_j\bigg\| <\delta$$
for every $l=0,1,\ldots ,m$. Testing the previous condition for $l=0$, and since $n>2q$, we conclude that
necessarily
$$|x_k|<\delta \quad \textrm{for} \,\, |k|\geq n .$$
Having at our hands the above inequalities we argue as in the proof of Theorem 4.7 in \cite{BePe2} and we
conclude that for all $|j|\leq q$ and for all $l=1,\ldots ,m$
$$ \prod_{s=1}^{ln}w_{s+j}>\frac{1}{\epsilon} \quad \textrm{and} \quad \prod_{s=0}^{ln-1}w_{j-s}<\epsilon  .$$
Hence we proved that \textit{(i)} implies \textit{(iv)}. Condition \textit{(ii)}, \textit{(iii)} and \textit{(iv)} are known to be equivalent from
Proposition 4.8 and Corollary 4.9 in \cite{BePe2}. Finally the implication $(iii)\Rightarrow (i)$ holds
trivially and this completes the proof of the equivalence of statements \textit{(i)-(iv)} of the proposition. It remains to prove that \textit{(iv)} is equivalent to \textit{(v)} in the case that $T$ is invertible. For $l=1$ this has been done in \cite{Fel}. The case of general $l$ follows by an obvious modification of the argument in \cite{Fel}.
\end{proof}
In the context of unilateral weighted shifts, it turns out that some general classes of operators are always topologically multiply recurrent. To see this we first need to recall the following well known notions from topological dynamics.
\begin{definition}
An operator $T$ acting on $X$ is called \emph{topologically mixing} if for every pair $(U,V)$ of non-empty open
sets in $X$ there exists a positive integer $N$ such that $T^n(U)\cap V\neq \emptyset $ for every $n\geq N$.
\end{definition}

\begin{definition}
An operator $T$ acting on $X$ is called \emph{chaotic} if it is hypercyclic and its set of periodic points, i.e
the set $\{ x\in X: T^nx=x \quad \mbox{for some} \quad n\in \mathbb{N} \}$, is dense in $X$.
\end{definition}

\begin{corollary} \label{c.mix}
Let $T:l^2(\mathbb{N})\to l^2(\mathbb{N})$ be a unilateral weighted shift with weight sequence $(w_n)_{n\in\mathbb N}$. If
$T$ is topologically mixing or chaotic then $T$ is topologically multiply recurrent.
\end{corollary}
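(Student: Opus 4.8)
The plan is to treat the two hypotheses by completely separate arguments, since chaoticity and mixing encode very different information and, in fact, the chaotic case needs nothing about weighted shifts.

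First I would dispose of the chaotic case. If $T$ is chaotic then its set of periodic points is dense in $X$, and a single periodic point is automatically a witness to topological multiple recurrence: given a non-empty open set $U$ and a positive integer $m$, choose a periodic point $x\in U$, say $T^p x=x$ for some $p\in\mathbb N$. Then $T^{jp}x=x\in U$ for every $j$, so $x\in U\cap T^{-p}U\cap\cdots\cap T^{-mp}U$, which is therefore non-empty. Hence $T$ is topologically multiply recurrent with $k=p$. (This in fact shows that \emph{every} chaotic operator on $X$ is topologically multiply recurrent.)

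The mixing case is where the shift structure enters. I would first recall the classical characterization (see \cite{GrPe}) that a unilateral weighted backward shift $T$ is topologically mixing if and only if $\beta(n):=\prod_{i=1}^n w_i\to+\infty$. Fix a non-empty open $U$ and a positive integer $m$; by density of finitely supported vectors it suffices to approximate an arbitrary target $y=\sum_{i=1}^q y_i e_i$ with $B(y,\epsilon)\subset U$. The idea is to produce a single vector whose iterates $x,T^kx,\ldots,T^{mk}x$ all land in $B(y,\epsilon)$ for a suitable large $k$. Concretely I would set $x=y+\sum_{j=1}^m z_j$, where $z_j=\sum_{i=1}^q \frac{y_i}{\prod_{s=i+1}^{i+jk}w_s}\,e_{i+jk}$ is the unique vector supported around level $jk$ with $T^{jk}z_j=y$ exactly.

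The bookkeeping then splits into three parts, and controlling the cross terms so that $T^{jk}x\approx y$ simultaneously for all $j=0,1,\ldots,m$ is the main (though routine) obstacle. (a) Once $k>q$, the vector $y$ and every $z_{j'}$ with $j'<j$ is supported at levels strictly below $jk$, so $T^{jk}$ shifts their support below index $1$ and annihilates it; here $Te_1=0$ is essential. (b) The diagonal term gives $T^{jk}z_j=y$ by construction. (c) For $j'>j$ the surviving coefficients of $T^{jk}z_{j'}$ equal $y_i\,\beta(i)/\beta(i+(j'-j)k)$, while the norms $\|z_j\|$ are governed by $\beta(i)/\beta(i+jk)$. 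Since $i\le q$ is bounded and $(j'-j)k,\,jk\to\infty$, mixing forces each denominator $\beta(i+(\cdot)k)\to+\infty$, so all these error terms tend to $0$ as $k\to\infty$. Choosing $k$ large (and $>q$) then yields $T^{jk}x\in B(y,\epsilon)\subset U$ for every $j=0,\ldots,m$, proving topological multiple recurrence. As an alternative route for this case, one could invoke that a mixing unilateral weighted shift makes $T,T^2,\ldots,T^m$ densely $d$-hypercyclic (Bes--Peris, \cite{BePe2}) and then apply the trivial implication that densely $d$-hypercyclic powers force topological multiple recurrence, exactly as in the step $(iii)\Rightarrow(i)$ of Proposition \ref{p.equiv}.
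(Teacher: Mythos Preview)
Your proof is correct, but both halves proceed along genuinely different lines from the paper.

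For the chaotic case you give the obvious elementary argument via periodic points, which works for \emph{any} chaotic operator on \emph{any} space. The paper instead exploits the shift structure: it invokes the weight characterization $\sum_{n\geq 1}(w_1\cdots w_n)^{-2}<\infty$ of chaotic unilateral shifts, notes that this condition is known to imply frequent hypercyclicity, and then appeals to Proposition~\ref{p.simple} (i.e.\ to Szemer\'edi's theorem) to conclude. Your route is shorter, more general, and avoids Szemer\'edi entirely; the paper's route, however, fits the overarching theme of the article by illustrating once more how property~$\mathcal A$ and Szemer\'edi yield multiple recurrence.

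For the mixing case your primary argument is a direct construction of a single witness vector $x=y+\sum_{j=1}^m z_j$, using only the weight condition $\beta(n)\to\infty$; this is self-contained and entirely correct. The paper instead argues that mixing immediately gives hypercyclicity of $T\oplus T^2\oplus\cdots\oplus T^m$ and then invokes the (unstated) unilateral analogue of Proposition~\ref{p.equiv}. The alternative you mention at the end---via $d$-hypercyclicity of $T,T^2,\ldots,T^m$ and the implication $(iii)\Rightarrow(i)$---is essentially the paper's argument in different words. Your direct construction has the advantage of not relying on any unstated analogue; the paper's version is shorter to write but leans on machinery from \cite{BePe2}.
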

\begin{proof}
Suppose first that $T$ is topologically mixing. Take a positive integer $m\geq 2$. Then it is easy to show that
$T\oplus T^2\oplus \cdots \oplus T^m$ is hypercyclic and by the analogue of Proposition \ref{p.equiv} for
unilateral shifts the conclusion follows. Assume now that $T$ is chaotic. Then the weight sequence $(w_n)_{n\in\mathbb N}$
satisfies the condition
\[ \sum_{n=1}^{+\infty}(w_1\cdots w_n)^{-2} <+\infty .\]
See for instance Theorem 6.12 in \cite{BM}. The last condition is known to be sufficient for $T$ to be
frequently hypercyclic; see \cite{BaGr2}. By Proposition \ref{p.simple} we conclude that $T$ is topologically
multiply recurrent.
\end{proof}

\begin{remark}
In $c_0(\mathbb{N})$ there exists a unilateral weighted shift which is
frequently hypercyclic and thus topologically multiply recurrent but is neither chaotic nor mixing. Such an
example is provided in \cite{BaGr3}.
\end{remark}

\begin{proposition}\label{p.nontopolshift}
There exists a hypercyclic bilateral weighted shift on $l^2(\mathbb Z)$ which is not topologically multiply recurrent.
\end{proposition}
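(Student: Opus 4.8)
The plan is to build the weight sequence through its logarithmic primitive and to read off both hypercyclicity and the failure of topological multiple recurrence from the characterization in Proposition \ref{p.equiv}. Concretely, I would construct positive weights $(w_n)_{n\in\mathbb Z}$ that are bounded and bounded away from zero, so that $T$ is invertible and condition \textit{(v)} of Proposition \ref{p.equiv} is available. Writing $a_n=\log w_n$ and $A_k=\sum_{i=1}^{k}a_i$ for $k\ge 0$ (extended to negative indices so that $A_k-A_{k-1}=a_k$ for all $k$), I would impose the symmetry $A_{-k}=A_k$. A short computation then shows that the two products in \textit{(v)}, namely $\prod_{i=1}^{\ell n}w_i=e^{A_{\ell n}}$ and $\prod_{i=0}^{\ell n}\frac{1}{w_{-i}}=e^{A_{\ell n+1}}$, are both governed by the single function $A$ on the nonnegative integers. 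In these terms hypercyclicity (Salas's criterion \cite{Salas}, equivalently the case $m=1$ of \textit{(v)} via \cite{Fel} since $T$ is invertible) amounts to the existence of $n_k\uparrow\infty$ with $A_{n_k}\to\infty$, while topological multiple recurrence already for $m=2$ would require $n_k\uparrow\infty$ with \emph{both} $A_{n_k}\to\infty$ and $A_{2n_k}\to\infty$.

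Thus the task reduces to manufacturing a nonnegative function $A$ on $\mathbb Z_{\ge0}$ with $A_0=0$ and bounded increments that attains arbitrarily large values yet never has $A_n$ and $A_{2n}$ simultaneously large. I would take $A$ to be a sequence of triangular \emph{bumps}: on an interval $[q_s,r_s]$ the graph rises with slope $\log 2$ from $0$ to a height $h_s$ and descends with slope $-\log 2$ back to $0$ (so that $w_n\in\{2,1,\tfrac12\}$, making $T$ bounded and invertible), the heights satisfying $h_s\to\infty$, and $A\equiv 0$ on the valleys between consecutive bumps. The peaks $p_s=(q_s+r_s)/2$ satisfy $A_{p_s}=h_s\to\infty$, so the sequence $n_k=p_k$ witnesses the case $m=1$ and $T$ is hypercyclic; since each bump is wide, the required uniformity over $|j|\le q$ in Salas's criterion is inherited from the single estimate $A_{j\pm p_k}-A_j\ge h_k-2q\log 2\to\infty$.

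The decisive point is to space the bumps so sparsely that doubling any index where $A>0$ always lands in a valley. I would choose the positions recursively with $q_s>2h_s/\log 2$ and $q_{s+1}>2r_s$. The first inequality forces $2q_s>r_s$ and the second forces $2r_s<q_{s+1}$, so the dilated interval $(2q_s,2r_s)$ sits strictly inside the valley $(r_s,q_{s+1})$ on which $A$ vanishes; the same two inequalities also push $(2q_s,2r_s)$ past every earlier bump and before every later one. Consequently, whenever $A_n>0$ (so $n$ lies in the interior of some bump) we have $A_{2n}=0$, while $A_n=0$ otherwise; in either case $\min(A_n,A_{2n})=0$ for every $n$. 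Hence no increasing sequence $n_k$ can make $A_{n_k}$ and $A_{2n_k}$ tend to infinity together, condition \textit{(v)} fails already for $m=2$, and Proposition \ref{p.equiv} shows that this hypercyclic shift is not topologically multiply recurrent.

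I expect the only genuine obstacle to be this balancing act: hypercyclicity demands bumps of unbounded height, whereas destroying double recurrence demands that twice the location of each tall peak be a flat valley, and the two requirements are reconciled only by placing the bumps along a very rapidly growing sequence of positions $q_s$. The remaining ingredients—the boundedness and invertibility of $T$, the symmetry reduction of \textit{(v)} to the single function $A$, and the bookkeeping that $2\cdot(\text{bump }s)$ avoids all bumps—are routine once $q_s$ is chosen to grow fast enough.
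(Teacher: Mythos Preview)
Your construction is correct and genuinely different from the paper's argument. The paper gives a one-line proof: it simply cites Theorem~1.3 of \cite{GrRo} for the existence of a hypercyclic bilateral weighted shift $T$ on $l^2(\mathbb Z)$ such that $T\oplus T^2$ is not hypercyclic, and then invokes the equivalence \textit{(i)}\,$\Leftrightarrow$\,\textit{(ii)} of Proposition~\ref{p.equiv}. You instead build the weight sequence explicitly via the triangular-bump profile of the logarithmic primitive $A$, arrange the symmetry $A_{-k}=A_k$ so that both products in condition~\textit{(v)} reduce to $e^{A_{\ell n}}$ and $e^{A_{\ell n+1}}$, and then exploit the invertible-shift criterion \textit{(v)} (the Feldman simplification) rather than the direct-sum criterion \textit{(ii)}. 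The sparse spacing $q_s>2h_s/\log 2$ and $q_{s+1}>2r_s$ forces $\min(A_n,A_{2n})=0$ for every $n$, which cleanly kills the case $m=2$ of \textit{(v)} while the peaks $p_s$ supply the case $m=1$ and hence hypercyclicity.

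What each approach buys: the paper's route is maximally short but offloads the entire construction to \cite{GrRo}; your route is self-contained, makes the obstruction mechanism transparent (doubling a tall peak lands in a valley), and incidentally yields an explicit invertible example with weights in $\{1/2,1,2\}$. In substance your bump construction is of the same flavor as what underlies the cited Grivaux--Roginskaya examples, so you are effectively reproving the needed special case of \cite{GrRo} by hand and then feeding it into a different clause of Proposition~\ref{p.equiv}.
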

\begin{proof}
Take a hypercyclic bilateral weighted shift $T$ acting on $l^2(\mathbb{Z})$ such that $T\oplus T^2$ is not
hypercyclic. Examples of such operators are provided, for instance, in Theorem 1.3 of \cite{GrRo}. By
Proposition \ref{p.equiv} the operator $T$ is not topologically multiply recurrent.
\end{proof}

\begin{remark} The operator of Proposition \ref{p.nontopolshift} provides us with an example of an operator which is recurrent but not U-frequently recurrent in view of Theorem \ref{t.main1}.
\end{remark}

We finish this section by showing that in general the converse of Proposition \ref{p.simple} is not true.
\begin{proposition}
There exists a unilateral weighted shift on $l^2(\mathbb{N})$ which is topologically multiply recurrent but not
frequently hypercyclic.
\end{proposition}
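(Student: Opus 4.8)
The plan is to exploit the sizeable gap, for unilateral weighted shifts, between the condition guaranteeing topological multiple recurrence and the condition guaranteeing frequent hypercyclicity. By Corollary \ref{c.mix} a topologically mixing unilateral weighted shift is automatically topologically multiply recurrent, and mixing of a unilateral backward shift only demands that the partial products $p_n\eqdef w_1w_2\cdots w_n$ tend to $+\infty$; frequent hypercyclicity, by contrast, is a far more demanding property, tied to the summability of $(p_n^{-2})_{n\in\mathbb N}$. So I would arrange the products to diverge to $+\infty$ as slowly as possible — fast enough to be mixing, but too slowly for the coordinate budget that frequent hypercyclicity requires. The cleanest choice is $p_n=\sqrt n$, realized by $w_1=1$ and $w_n=\sqrt{n/(n-1)}$ for $n\geq 2$. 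These weights lie in $[1,\sqrt2]$ and tend to $1$, so the associated shift $T$ is a well-defined bounded operator on $l^2(\mathbb N)$.

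First I would dispatch topological multiple recurrence. Since $p_n=\sqrt n\to+\infty$, the shift $T$ is topologically mixing by the standard mixing criterion for unilateral weighted backward shifts, and Corollary \ref{c.mix} then yields immediately that $T$ is topologically multiply recurrent.

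Next I would show that $T$ is \emph{not} frequently hypercyclic, and here I would argue directly by an $l^2$-summability obstruction rather than invoke an external characterization. Writing $x=\sum_m x_me_m$, the coefficient of $e_1$ in $T^nx$ equals $x_{n+1}\prod_{i=2}^{n+1}w_i=x_{n+1}\,p_{n+1}=x_{n+1}\sqrt{n+1}$. If $x$ were a frequently hypercyclic vector, then for $U=B(e_1,1/2)$ the return set $D\eqdef\{n\in\mathbb N: T^nx\in U\}$ would have positive lower density, and for each $n\in D$ we would have $|x_{n+1}|\sqrt{n+1}\geq 1/2$, that is $|x_{n+1}|^2\geq 1/\bigl(4(n+1)\bigr)$. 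Summing over $D$ would give $\|x\|^2\geq \tfrac14\sum_{n\in D}1/(n+1)$; since the counting function $|D\cap[1,N]|$ grows at least linearly in $N$, an Abel-summation comparison shows $\sum_{n\in D}1/(n+1)=+\infty$, forcing $\|x\|^2=+\infty$ and contradicting $x\in l^2(\mathbb N)$. Hence no frequently hypercyclic vector can exist.

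The only genuinely delicate point is the non–frequent–hypercyclicity, and even there the argument is elementary: landing near $e_1$ at time $n$ costs the coordinate $x_{n+1}$ at least $c/\sqrt{n+1}$, a price that a positive-density set of times cannot afford inside $l^2$. The remaining ingredients — the mixing criterion for the partial products and the passage from mixing to multiple recurrence through Corollary \ref{c.mix} — are standard, so the real content is simply selecting the borderline growth $p_n=\sqrt n$, sitting exactly between the mixing threshold and the frequent-hypercyclicity threshold.
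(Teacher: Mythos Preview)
Your proof is correct and uses the same example as the paper (up to a harmless index shift): the unilateral weighted shift with partial products $p_n\asymp\sqrt n$, shown to be topologically multiply recurrent via mixing and Corollary~\ref{c.mix}. The one point of departure is the non--frequent--hypercyclicity: the paper simply invokes Example~2.9 of \cite{BaGr2}, whereas you supply a direct, self-contained $l^2$-obstruction --- a positive-density return set $D$ to $B(e_1,1/2)$ would force $\sum_{n\in D}1/(n+1)\leq 4\|x\|^2<\infty$, contradicting the divergence of harmonic sums over sets of positive lower density. This is an elaboration rather than a genuinely different route; its benefit is that your argument stands on its own without the external reference.
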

\begin{proof}
Consider the unilateral weighted shift $T$ on $l^2(\mathbb{N})$ with weight sequence $w_n=\sqrt{\frac{n+1}{n}}$,
$n=1,2,\ldots $. Then $\prod_{i=1}^nw_i=\sqrt{n+1}\to +\infty $. By the main result of \cite{CoSa} it follows
that $T$ is topologically mixing and by Corollary \ref{c.mix} the operator $T$ is topologically multiply recurrent. On
the other hand, as it is shown in Example 2.9 of \cite{BaGr2}, $T$ is not frequently hypercyclic.
\end{proof}

\section{Adjoints of multiplication operators} \label{s.adjoints}
In this section we will study adjoints of multiplication operators on suitable Hilbert spaces. As we shall see
in this case it is easy to characterize topological multiple recurrence in terms of several different well
understood conditions. Following \cite{GoSh} we fix a non-empty open connected set $\Omega$ of $\mathbb{C}^n$,
$n\in \mathbb{N}$, and $H$ a Hilbert space of holomorphic functions on $\Omega $ such that:
\begin{list}{}{}
\item[-] $H\neq \{ 0\}$, and
\item [-]for every $z\in \Omega$, the point evaluation functionals $f\to f(z)$, $f\in H$, are bounded.
\end{list}
Recall that every complex valued function $\phi:\Omega \to \mathbb{C}$ such that the pointwise product $\phi f$
belongs to $H$ for every $f\in H$ is called \emph{a multiplier of $H$}. In particular $\phi$ defines the multiplication
operator $M_{\phi}:H\to H$ in terms of the formula $$M_{\phi }(f)=\phi f,\quad f\in H.$$ By the boundedness of point
evaluations along with the closed graph theorem it follows that $M_{\phi}$ is a bounded linear operator on $H$.
It turns out that under our assumptions on $H$, every multiplier $\phi $ is a bounded holomorphic function, that
is $\| \phi \|_{\infty}:=\sup_{z\in \Omega }|\phi (z)|<+\infty $. In particular we have that $\| \phi \|_{\infty }\leq \| M_{\phi}\| $; see \cite{GoSh}.

In Proposition \ref{p.adjequiv} we require the more stringent condition $\| M_{\phi}\| =\| \phi \|_{\infty }$
as well as the condition that every non-constant bounded holomorphic function $\phi $ on $\Omega $ is a
multiplier of $H$. This is quite natural since it is actually the case in typical examples of Hilbert spaces of
holomorphic functions such as the Hardy space $H^2(\mathbb D)$ or the Bergman space $A^2(\mathbb D)$, on the unit disk $\mathbb D$. On the other
hand Proposition \ref{p.adjequiv} fails if we remove this hypothesis as can be seen by studying adjoints of
multiplication operators on Dirichlet spaces. See Example 2.4 of \cite{ChSe2}.
\begin{proposition} \label{p.adjequiv}
Suppose that every non-constant bounded holomorphic function $\phi $ on $\Omega $ is a multiplier of $H$ such
that $\| M_{\phi}\| =\| \phi \|_{\infty }$. Then for each such $\phi$ the following are equivalent.
\begin{list}{}{}
\item [(i)] $M_{\phi}^*$ is topologically multiply recurrent.

\item [(ii)] $M_{\phi }^*$ is recurrent.

\item[(iii)] $M_{\phi }^*$ is frequently hypercyclic.

\item[(iv)] $M_{\phi }^*$ is hypercyclic.

\item[(v)] $\phi (\Omega )\cap \mathbb{T}\neq \emptyset $.

\item[(vi)] $M_{\phi }^*$ has property $\mathcal A$.
\end{list}
\end{proposition}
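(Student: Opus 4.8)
The plan is to establish the six conditions as one cycle of implications, isolating the single step that carries genuine content. First I set up notation that is used throughout. Since point evaluations on $H$ are bounded, $H$ is a reproducing kernel Hilbert space; writing $k_z$ for the kernel at $z\in\Omega$ we have $\langle f,k_z\rangle=f(z)$, and a direct computation gives $M_\phi^* k_z=\overline{\phi(z)}\,k_z$. Thus the kernels are eigenvectors of $M_\phi^*$ with eigenvalues $\overline{\phi(z)}$, and their span is dense in $H$ (if $f\perp k_z$ for all $z$ then $f\equiv 0$). Moreover $\phi$ is a non-constant holomorphic function on the connected open set $\Omega$, so $\phi(\Omega)$ is \emph{open} (restrict $\phi$ to a complex line through a given point on which it is non-constant and apply the one-variable open mapping theorem) and \emph{connected}.

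The routine part of the cycle goes as follows. Implication (i)$\Rightarrow$(ii) is immediate on taking $m=1$ in the definition of topological multiple recurrence. For (v)$\Rightarrow$(iii), the condition $\phi(\Omega)\cap\mathbb T\neq\emptyset$ is precisely the hypothesis under which \cite{BaGr2} shows $M_\phi^*$ to be frequently hypercyclic, so I cite that result. A frequently hypercyclic operator is frequently universal, hence has property $\mathcal A$, giving (iii)$\Rightarrow$(vi); and (vi)$\Rightarrow$(i) is Proposition \ref{p.simple}. Together with the still-to-be-proved (ii)$\Rightarrow$(v) this closes the loop (i)$\Rightarrow$(ii)$\Rightarrow$(v)$\Rightarrow$(iii)$\Rightarrow$(vi)$\Rightarrow$(i), so that (i), (ii), (iii), (v), (vi) are all equivalent. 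Finally (iv) is folded in trivially: (iii)$\Rightarrow$(iv) because frequent hypercyclicity implies hypercyclicity, and (iv)$\Rightarrow$(ii) because every hypercyclic operator is recurrent (as recorded in the introduction); this matches the classical Godefroy--Shapiro picture (iv)$\Leftrightarrow$(v) of \cite{GoSh}.

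The crux is (ii)$\Rightarrow$(v), which I prove in contrapositive form: if $\phi(\Omega)\cap\mathbb T=\emptyset$ then $M_\phi^*$ is not recurrent. Being open, connected and disjoint from $\mathbb T$, the set $\phi(\Omega)$ lies entirely in $\mathbb D$ or entirely in $\{|w|>1\}$. Suppose first $\phi(\Omega)\subset\mathbb D$, so $\|\phi\|_\infty\le 1$ and, by the standing hypothesis $\|M_\phi\|=\|\phi\|_\infty$, the powers satisfy $\|(M_\phi^*)^n\|=\|M_\phi^n\|\le\|M_\phi\|^n\le 1$ for all $n$. On each kernel $(M_\phi^*)^n k_z=\overline{\phi(z)}^{\,n}k_z\to 0$ since $|\phi(z)|<1$; because the $k_z$ span a dense subspace and the operators $(M_\phi^*)^n$ are uniformly bounded, it follows that $(M_\phi^*)^n f\to 0$ for every $f\in H$. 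Hence any recurrent vector $x$ satisfies $x=\lim_j (M_\phi^*)^{n_j}x=0$, so $0$ is the only recurrent vector; since recurrent vectors would be dense were $M_\phi^*$ recurrent (Remark \ref{r.rec}), the operator is not recurrent. For the complementary case $\phi(\Omega)\subset\{|w|>1\}$, the function $1/\phi$ is non-constant, holomorphic and bounded by $1$ on $\Omega$, hence a multiplier by hypothesis; then $M_{1/\phi}M_\phi=M_\phi M_{1/\phi}=I$, so $M_\phi$, and therefore $M_\phi^*$, is invertible with $(M_\phi^*)^{-1}=M_{1/\phi}^*$. By Remark \ref{r.rec}, $M_\phi^*$ is recurrent if and only if $M_{1/\phi}^*$ is, and $(1/\phi)(\Omega)\subset\mathbb D$, so the previous paragraph shows $M_{1/\phi}^*$ is not recurrent. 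This finishes (ii)$\Rightarrow$(v).

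The main obstacle is exactly this last case. Recurrence does not a priori control the modulus of the spectrum, and when $\phi(\Omega)$ hugs $\mathbb T$ from outside one cannot fall back on a norm-decay or expansivity estimate, since the infimum of $|\phi|$ on $\Omega$ may equal $1$. The device that resolves it is to pass to the inverse via the multiplier $1/\phi$ --- available precisely because every non-constant bounded holomorphic function is assumed to be a multiplier with $\|M_\psi\|=\|\psi\|_\infty$ --- thereby reducing the expanding case to the contracting one, where the reproducing-kernel computation yields strong convergence to $0$ and annihilates every nontrivial recurrent vector. It is worth noting that this is also where the hypotheses of the proposition are genuinely used, so the argument is expected to fail on spaces (such as certain Dirichlet spaces) where these conditions are dropped.
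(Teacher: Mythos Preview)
Your proof is correct and follows the same overall architecture as the paper: the same cycle of implications, with (ii)$\Rightarrow$(v) as the only substantive step, the same connectedness dichotomy on $\phi(\Omega)$, and the same reduction of the exterior case to the interior case via the multiplier $1/\phi$ and Remark~\ref{r.rec}.

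Where you diverge is in the treatment of the case $\phi(\Omega)\subset\mathbb D$. The paper argues by a dichotomy on recurrent vectors: either some nonzero recurrent vector $g$ satisfies $\|M_\phi^* g\|\le(1-\epsilon)\|g\|$, which combined with $\|M_\phi^*\|\le 1$ forces $\|(M_\phi^*)^n g\|\le(1-\epsilon)\|g\|$ for all $n$ and contradicts recurrence of $g$; or else $\|M_\phi^* g\|\ge\|g\|$ for every recurrent vector, hence by density $M_\phi^*$ is an isometry, which clashes with $\|M_\phi^* k_z\|=|\phi(z)|\,\|k_z\|<\|k_z\|$. Your route is more direct: the uniform bound $\|(M_\phi^*)^n\|\le 1$ together with $(M_\phi^*)^n k_z=\overline{\phi(z)}^{\,n}k_z\to 0$ on the dense span of kernels gives $(M_\phi^*)^n\to 0$ strongly on all of $H$, so the only recurrent vector is $0$. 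This is cleaner and avoids the case split; the paper's argument, on the other hand, does not need to invoke density of the kernel span (only one kernel is used, at the very end). Both arguments exploit the reproducing-kernel eigenvector identity, and both rely on the hypothesis $\|M_\phi\|=\|\phi\|_\infty$ in the same way.
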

\begin{proof}
Conditions \textit{(iii)}, \textit{(iv)} and \textit{(v)} are known to be equivalent; see \cite{BaGr2}, \cite{GoSh}. Trivially
\textit{(iii)} implies \textit{(vi)}; by Proposition \ref{p.simple} \textit{(vi)} implies \textit{(i)} and trivially \textit{(i)} implies \textit{(ii)}.  We will show that \textit{(ii)} implies \textit{(v)}.
Indeed, assume that $M_{\phi }^*$ is recurrent.  Suppose, for the sake of contradiction, that $\phi (\Omega
)\cap \mathbb{T}= \emptyset $. Since $\Omega$ is connected, so is $\phi (\Omega )$; thus, we either have that
$\phi (\Omega )\subset \{ z\in \mathbb{C}: |z|<1 \}$ or $\phi (\Omega )\subset \{ z\in \mathbb{C}: |z|>1 \}$.

\medskip
\noindent\fbox{ \textbf{Case 1.  $\phi (\Omega )\subset \{ z\in \mathbb{C}: |z|<1 \}$.}}
\medskip

 Then we have $\| M_{\phi }^*
\|=\| M_{\phi } \| =\| \phi \|_{\infty} \leq 1$. We will consider two complementary cases. Assume that there
exist $0<\epsilon<1$ and a non-zero recurrent vector $g$ for $M_{\phi }^*$ such that
\[ \| M_{\phi }^*g\| \leq (1-\epsilon)\| g\| .\]
The above inequality and the fact that $\| M_{\phi }^* \|\leq 1$ imply that for every positive integer $n$
\[ \| (M_{\phi }^*)^ng\| \leq (1-\epsilon)\| g\| .\]
On the other hand for some strictly increasing sequence of positive integers $\{ n_k\}$ we have $(M_{\phi
}^*)^{n_k}g\to g$. Using the last inequality we arrive at $\| g\| \leq (1-\epsilon )\| g\|$, a contradiction. In
the complementary case we must have $\| M_{\phi }^*g \| \geq \| g\| $ for every vector $g$ which is recurrent  for
$M_{\phi }^*$. Since the set of recurrent vectors for $M_{\phi }^*$ is dense in $H$ we get that $\| M_{\phi }^*h
\| \geq \| h\| $ for every $h\in H$. Hence $\| M_{\phi }^*h \| = \| h\| $ for every $h\in H$. Take now $z\in
\Omega $ and consider the reproducing kernel $k_z$ of $H$. Then we have that
\[ \| M_{\phi }^*k_z \| =|\phi (z)|\| k_z\|< \| k_z\| .\]
For the previous identity see Proposition 4.4 of \cite{GoSh}. However, this is clearly impossible since $M_{\phi }^*$ is an isometry.

\medskip
\noindent\fbox{ \textbf{Case 2.  $\phi (\Omega )\subset \{ z\in \mathbb{C}: |z|>1 \}$.}}
\medskip

Here $1/\phi$ is a bounded holomorphic function satisfying $\| 1/\phi \|_{\infty }\leq 1$; therefore, $M_{\phi}^*$ is invertible. By Remark
\ref{r.rec} the operator $M_{1/\phi}^{*}=(M_{\phi}^*)^{-1}$ is recurrent and the proof follows by Case 1.
\end{proof}

\begin{remark}
It is easy to see that under the hypotheses of Proposition \ref{p.adjequiv}, $M_{\phi }$ is never recurrent. On
the other hand, suppose that $\phi$ is a constant function with $\phi(z)=a$ for some $a\in\mathbb C$ and every $z\in \Omega$. Then we have that
$M_{\phi }$ (or equivalently $M_{\phi }^*$) is recurrent if and only if $M_{\phi }$ is topologically multiply
recurrent if and only if $|a|=1$. In order to prove this it is enough to notice that for every non-zero complex number $a$, with $|a|=1$, and every positive integer $m$, there exists an increasing sequence of positive integers $\{n_k\}$ such that $(a^{n_k},a^{2n_k},\ldots,a^{mn_k})\to (1,1,\ldots,1).$
\end{remark}

\section{Further Questions} \label{s.remarks}
We conclude this note by suggesting a series of questions that relate to the results and the notions discussed in the preceding paragraphs:

\begin{question}
Let $T:l^2(\mathbb{N})\to l^2(\mathbb{N})$ be a unilateral weighted shift. It is a classical result of Salas
that $I+T$ is hypercyclic; see \cite{Salas}. In fact, as observed by Grivaux in \cite{Grivaux}, $I+T$ is even
mixing. Hence it is natural to ask the following question: is it true that $I+T$ is topologically multiply
recurrent?
\end{question}

\begin{question}
Let $T$ be frequently hypercyclic. Is it true that for every
positive integer $N\geq 2$ the operator $T\oplus T^2\oplus \cdots \oplus T^m$ is hypercyclic? Recall that
Grosse-Erdmann and Peris have shown that in this case $T\oplus T$ is hypercyclic; see \cite{GP}. The fact that
$T\oplus T$ is hypercyclic is known to be equivalent to $T$ satisfying the \emph{hypercyclicity criterion}; see
\cite{BePe}. However, if $T$ is hypercyclic it is not true in general that $T\oplus T$ is hypercyclic. This was
a long standing question that was solved in \cite{DeRe} and in a more general context in \cite{BM1}. In
\cite{CoRu} it is proved that if $T$ is frequently hypercyclic then
$T^j\oplus T^m$ is hypercyclic, for every pair of positive integers $(j,m)$. We propose the following stronger
question. Let $T$ be a frequently hypercyclic operator. Is it true that the operators $T,T^2, \ldots ,T^N$ are $d$-hypercyclic for every positive integer $N\geq 2$? Observe that by
Propositions \ref{p.simple}, \ref{p.equiv} and Theorem \ref{t.secondary} this is indeed the case for bilateral weighted
shifts.
\end{question}

\begin{question}
Observe that Proposition \ref{p.adjequiv} misses the case of Hilbert spaces $H$ where not all bounded
holomorphic functions are multipliers. An example of such a space is the \emph{Dirichlet space}
$\textnormal{Dir}(\mathbb{D})$, that is the Hilbert space of holomorphic functions $f:\mathbb{D}\to\mathbb C$, satisfying
\[ \| f\|_{\textnormal{Dir}} ^2:=|f(0)|^2+\frac{1}{\pi } \int|f'(z)|^2dA(z)<+\infty ,\] where $dA$ denotes the area measure. It would be
interesting to characterize when the adjoints of multiplication operators on $\textnormal{Dir}(\mathbb{D})$ are hypercyclic,
frequently hypercyclic, recurrent, or topologically multiply recurrent. For results along this direction we refer to \cite{BSH}.
\end{question}

\begin{question} It is easy to see that every chaotic operator $T$ has property $\mathcal A$. A well known question asks whether every chaotic operator is frequently hypercyclic; see for example \cite{BM}. An even stronger question is thus whether every hypercyclic operator that has property $\mathcal A$ is frequently hypercyclic.
\end{question}

\begin{question} Let $( \lambda_n)_{n\in\mathbb N}$ be a good sequence. Is it true that there is a positive integer $\tau$  such that the limit $\lim_{n\to\infty} |\lambda_{n}|/|\lambda_{n+\tau|}$ exists? A positive answer would provide a complete characterization of good sequences.
\end{question}

\begin{question} As observed in section \ref{ss.weakerproperties} every U-frequently recurrent operator has property $\mathcal A$. Does there exist an operator which has property $\mathcal A$ but is \emph{not} U-frequently recurrent? We suspect that the answer is positive.
	
\end{question}

\begin{bibsection}
\begin{biblist}

	\bib{BADGR}{article}{
		Author = {Badea, Catalin},
		Author = {Grivaux, Sophie},
		Coden = {ADMTA4},
		Date-Added = {2010-12-03 19:29:57 +0200},
		Date-Modified = {2010-12-03 19:33:33 +0200},
		Doi = {10.1016/j.aim.2006.09.010},
		Fjournal = {Advances in Mathematics},
		Issn = {0001-8708},
		Journal = {Adv. Math.},
		Mrclass = {47A10 (11K06 37B99 47A16 47D06)},
		Mrnumber = {2323544 (2008g:47011)},
		Mrreviewer = {Teresa Berm{\'u}dez},
		Number = {2},
		Pages = {766--793},
		Title = {Unimodular eigenvalues, uniformly distributed sequences and linear dynamics},
		Url = {http://dx.doi.org/10.1016/j.aim.2006.09.010},
		Volume = {211},
		Year = {2007},
		Bdsk-Url-1 = {http://dx.doi.org/10.1016/j.aim.2006.09.010} }

\bib{BaGr1}{article}{
	Author = {Bayart, Fr{\'e}d{\'e}ric},
	Author ={ Grivaux, Sophie},
	Coden = {JFUAAW},
	Date-Added = {2010-08-20 18:58:50 +0300},
	Date-Modified = {2010-08-20 18:59:14 +0300},
	Doi = {10.1016/j.jfa.2005.06.001},
	Fjournal = {Journal of Functional Analysis},
	Issn = {0022-1236},
	Journal = {J. Funct. Anal.},
	Mrclass = {47A16 (47A10 47B33)},
	Mrnumber = {2159459 (2006i:47014)},
	Mrreviewer = {E. A. Gallardo-Guti{\'e}rrez},
	Number = {2},
	Pages = {281--300},
	Title = {Hypercyclicity and unimodular point spectrum},
	Url = {http://dx.doi.org/10.1016/j.jfa.2005.06.001},
	Volume = {226},
	Year = {2005},
	Bdsk-Url-1 = {http://dx.doi.org/10.1016/j.jfa.2005.06.001} }

	\bib{BaGr2}{article}{    
		author={Bayart, Fr{\'e}d{\'e}ric},    
		author={Grivaux, Sophie},    
		title={Frequently hypercyclic operators},    
		journal={Trans. Amer. Math. Soc.},    
		volume={358},    
		date={2006},    
		number={11},    
		pages={5083--5117 (electronic)},    
		issn={0002-9947},    
		review={\MR{2231886 (2007e:47013)}},    
		doi={10.1090/S0002-9947-06-04019-0}, }

\bib{BaGr3}{article}{
	Author = {Bayart, Fr{\'e}d{\'e}ric},
	Author = {Grivaux, Sophie},
	Date-Added = {2010-08-20 19:00:40 +0300},
	Date-Modified = {2010-08-20 19:01:00 +0300},
	Doi = {10.1112/plms/pdl013},
	Fjournal = {Proceedings of the London Mathematical Society. Third Series},
	Issn = {0024-6115},
	Journal = {Proc. Lond. Math. Soc. (3)},
	Mrclass = {47A16 (28C20 28D05 37A30 47A35)},
	Mrnumber = {2294994 (2008i:47019)},
	Mrreviewer = {Douglas P. Dokken},
	Number = {1},
	Pages = {181--210},
	Title = {Invariant {G}aussian measures for operators on {B}anach spaces and linear dynamics},
	Url = {http://dx.doi.org/10.1112/plms/pdl013},
	Volume = {94},
	Year = {2007},
	Bdsk-Url-1 = {http://dx.doi.org/10.1112/plms/pdl013}}

\bib{BM1}{article}{
	Author = {Bayart, Fr{\'e}d{\'e}ric},
	Author = {Matheron, \'Etienne},
	Coden = {JFUAAW},
	Date-Added = {2010-10-26 23:37:28 +0100},
	Date-Modified = {2010-10-26 23:37:44 +0100},
	Doi = {10.1016/j.jfa.2007.05.001},
	Fjournal = {Journal of Functional Analysis},
	Issn = {0022-1236},
	Journal = {J. Funct. Anal.},
	Mrclass = {47A16},
	Mrnumber = {2352487 (2008k:47016)},
	Mrreviewer = {Karl-Goswin Grosse-Erdmann},
	Number = {2},
	Pages = {426--441},
	Title = {Hypercyclic operators failing the hypercyclicity criterion on classical {B}anach spaces},
	Url = {http://dx.doi.org/10.1016/j.jfa.2007.05.001},
	Volume = {250},
	Year = {2007},
	Bdsk-Url-1 = {http://dx.doi.org/10.1016/j.jfa.2007.05.001}}

\bib{BM}{book}{
	Address = {Cambridge},
	Author = {Bayart, F.}
	Author = {Matheron, {\'E}.},
	Date-Added = {2010-03-19 13:52:18 +0000},
	Date-Modified = {2010-03-19 13:52:27 +0000},
	Doi = {10.1017/CBO9780511581113},
	Isbn = {978-0-521-51496-5},
	Mrclass = {47-02 (11M06 37B05 47A16 47A35 47Nxx)},
	Mrnumber = {MR2533318},
	Pages = {xiv+337},
	Publisher = {Cambridge University Press},
	Series = {Cambridge Tracts in Mathematics},
	Title = {Dynamics of linear operators},
	Url = {http://dx.doi.org/10.1017/CBO9780511581113},
	Volume = {179},
	Year = {2009},
	Bdsk-Url-1 = {http://dx.doi.org/10.1017/CBO9780511581113}}

	\bib{BELI}{article}{
				Author = {Bergelson, V.},
				Author = {Leibman, A.},
				Date-Added = {2010-12-07 15:41:04 +0000},
				Date-Modified = {2010-12-07 15:41:13 +0000},
				Doi = {10.1090/S0894-0347-96-00194-4},
				Fjournal = {Journal of the American Mathematical Society},
				Issn = {0894-0347},
				Journal = {J. Amer. Math. Soc.},
				Mrclass = {11B25 (05D10 28D05 54H20)},
				Mrnumber = {1325795 (96j:11013)},
				Mrreviewer = {Pierre Michel},
				Number = {3},
				Pages = {725--753},
				Title = {Polynomial extensions of van der {W}aerden's and {S}zemer\'edi's theorems},
				Url = {http://dx.doi.org/10.1090/S0894-0347-96-00194-4},
				Volume = {9},
				Year = {1996},
				Bdsk-Url-1 = {http://dx.doi.org/10.1090/S0894-0347-96-00194-4}}

\bib{Bernal}{article}{    
	author={Bernal-Gonz{\'a}lez, Luis},    
	title={Disjoint hypercyclic operators},    
	journal={Studia Math.},    
	volume={182},    
	date={2007},    
	number={2},    
	pages={113--131},    
	issn={0039-3223},    
	review={\MR{2338480 (2008k:47017)}},    
	doi={10.4064/sm182-2-2} }

\bib{BePe}{article}{
	Author = {B{\`e}s, J.},
	Author = {Peris, A.},
	Coden = {JFUAAW},
	Date-Added = {2010-12-05 18:32:13 +0000},
	Date-Modified = {2010-12-05 18:32:20 +0000},
	Doi = {10.1006/jfan.1999.3437},
	Fjournal = {Journal of Functional Analysis},
	Issn = {0022-1236},
	Journal = {J. Funct. Anal.},
	Mrclass = {47A16 (47B37)},
	Mrnumber = {1710637 (2000f:47012)},
	Mrreviewer = {Warren R. Wogen},
	Number = {1},
	Pages = {94--112},
	Title = {Hereditarily hypercyclic operators},
	Url = {http://dx.doi.org/10.1006/jfan.1999.3437},
	Volume = {167},
	Year = {1999},
	Bdsk-Url-1 = {http://dx.doi.org/10.1006/jfan.1999.3437}}

\bib{BePe2}{article}{
	Author = {B{\`e}s, Juan},
	Author = {Peris, Alfredo},
	Coden = {JMANAK},
	Date-Added = {2010-08-20 19:05:38 +0300},
	Date-Modified = {2010-08-20 19:06:47 +0300},
	Doi = {10.1016/j.jmaa.2007.02.043},
	Fjournal = {Journal of Mathematical Analysis and Applications},
	Issn = {0022-247X},
	Journal = {J. Math. Anal. Appl.},
	Mrclass = {47A16 (47B37 47B38)},
	Mrnumber = {2348507 (2008h:47020)},
	Mrreviewer = {Enhui Shi},
	Number = {1},
	Pages = {297--315},
	Title = {Disjointness in hypercyclicity},
	Url = {http://dx.doi.org/10.1016/j.jmaa.2007.02.043},
	Volume = {336},
	Year = {2007},
	Bdsk-Url-1 = {http://dx.doi.org/10.1016/j.jmaa.2007.02.043}}

	\bib{BG}{article}{    
		author={Bonilla, A.},    
		author={Grosse-Erdmann, K.-G.},    
		title={Frequently hypercyclic operators and vectors},    
		journal={Ergodic Theory Dynam. Systems},    
		volume={27},    
		date={2007},    
		number={2},    
		pages={383--404},   
		issn={0143-3857},    
		review={\MR{2308137 (2008c:47016)}},    
		doi={10.1017/S014338570600085X}}

		\bib{BonGroE}{article}{    
			author={Bonilla, A.},    
			author={Grosse-Erdmann, K.-G.},    
			title={Frequently hypercyclic operators and vectors---Erratum [MR    2308137]},    
			journal={Ergodic Theory Dynam. Systems},    
			volume={29},    
			date={2009},    
			number={6},    
			pages={1993--1994},    
			issn={0143-3857},    
			review={\MR{2563102 (2010k:47021)}},    
			doi={10.1017/S0143385709000959}, }

			\bib{BSH}{article}{    author={Bourdon, Paul S.},    author={Shapiro, Joel H.},    title={Hypercyclic operators that commute with the Bergman backward    shift},    journal={Trans. Amer. Math. Soc.},    volume={352},    date={2000},    number={11},    pages={5293--5316},    issn={0002-9947},    review={\MR{1778507 (2001i:47053)}},    doi={10.1090/S0002-9947-00-02648-9}, }

				\bib{ChSe2}{article}{    author={Chan, Kit C.},    author={Seceleanu, Irina},    title={Orbital limit points and hypercyclicity of operators on analytic    function spaces},    journal={Math. Proc. R. Ir. Acad.},    volume={110A},    date={2010},    number={1},    pages={99--109},    issn={1393-7197},    review={\MR{2666675 (2011d:47022)}} }

			\bib{ChSe}{article}{    
				author={Chan, Kit C.},    
				author={Seceleanu, Irina},    
				title={Hypercyclicity of shifts as a zero-one law of orbital limit points},    
				journal={J. Operator Theory},    
				volume={to appear},    
				date={2011},    
				number={},    
				pages={},   
				issn={},    
				review={},    
				doi={ }}

				\bib{CoSa}{article}{
				AUTHOR = {Costakis, George},
				Author = {Sambarino, Mart{\'{\i}}n},
				TITLE = {Topologically mixing hypercyclic operators},    
				JOURNAL = {Proc. Amer. Math. Soc.},   
				FJOURNAL = {Proceedings of the American Mathematical Society},     
				VOLUME = {132},       
				YEAR = {2004},    
				NUMBER = {2},      
				PAGES = {385--389},       
				ISSN = {0002-9939},      
				CODEN = {PAMYAR},    
				MRCLASS = {47A16 (37B05)},   
				MRNUMBER = {2022360 (2004i:47017)}, 
				MRREVIEWER = {Thomas Len Miller},        
				DOI = {10.1090/S0002-9939-03-07016-3},        
				URL = {http://dx.doi.org/10.1090/S0002-9939-03-07016-3}}

		\bib{CoRu}{article}{
			Author = {Costakis, G.},
			Author = {Ruzsa, I. Z.}
			Date-Added = {2010-10-26 18:07:16 +0100},
			Date-Modified = {2010-10-26 18:16:14 +0100},
			Journal = {preprint},
			Title = {Frequently Ces\`aro hypercylic operators are hypercyclic},
			Year = {2010}}

			\bib{DeRe}{article}{
				Author = {De la Rosa, Manuel},
				Author = {Read, Charles},
				Date-Added = {2010-10-26 23:40:27 +0100},
				Date-Modified = {2010-10-26 23:40:37 +0100},
				Fjournal = {Journal of Operator Theory},
				Issn = {0379-4024},
				Journal = {J. Operator Theory},
				Mrclass = {47A16 (47A15)},
				Mrnumber = {2501011 (2010e:47023)},
				Mrreviewer = {Karl-Goswin Grosse-Erdmann},
				Number = {2},
				Pages = {369--380},
				Title = {A hypercyclic operator whose direct sum $T\oplus T$ is not hypercyclic},
				Volume = {61},
				Year = {2009}}

				\bib{Fel}{article}{    author={Feldman, Nathan S.},    title={Hypercyclicity and supercyclicity for invertible bilateral    weighted shifts},    journal={Proc. Amer. Math. Soc.},    volume={131},    date={2003},    number={2},    pages={479--485},    issn={0002-9939},    review={\MR{1933339 (2003i:47032)}},    doi={10.1090/S0002-9939-02-06537-1}, }

		\bib{Fra}{article}{
								Author = {Frantzikinakis, N.},
								Date-Added = {2010-12-07 15:59:07 +0000},
								Date-Modified = {2010-12-07 15:59:30 +0000},
								Eprint = {0903.0042},
								Title = {Multiple recurrence and convergence for Hardy sequences of polynomial growth},
								Url = {http://arxiv.org/abs/0903.0042},
								Year = {2009},
								Bdsk-Url-1 = {http://arxiv.org/abs/0903.0042}}

\bib{FW}{article}{
				Author = {Frantzikinakis, N.},
				Author = { Wierdl, M.},
				Coden = {ADMTA4},
				Date-Added = {2010-12-07 15:57:44 +0000},
				Date-Modified = {2010-12-07 15:57:53 +0000},
				Doi = {10.1016/j.aim.2009.03.017},
				Fjournal = {Advances in Mathematics},
				Issn = {0001-8708},
				Journal = {Adv. Math.},
				Mrclass = {11B30 (05D10 11B25 28D05 37A45)},
				Mrnumber = {2531366 (2010f:11019)},
				Mrreviewer = {Bryna Kra},
				Number = {1},
				Pages = {1--43},
				Title = {A {H}ardy field extension of {S}zemer\'edi's theorem},
				Url = {http://dx.doi.org/10.1016/j.aim.2009.03.017},
				Volume = {222},
				Year = {2009},
				Bdsk-Url-1 = {http://dx.doi.org/10.1016/j.aim.2009.03.017}}

				\bib{Fur1}{article}{
					Author = {Furstenberg, H.},
					Date-Added = {2010-12-07 15:29:12 +0000},
					Date-Modified = {2010-12-07 15:30:08 +0000},
					Fjournal = {Journal d'Analyse Math\'ematique},
					Issn = {0021-7670},
					Journal = {J. Analyse Math.},
					Mrclass = {10L10 (10K10 28A65)},
					Mrnumber = {0498471 (58 \#16583)},
					Mrreviewer = {Francois Aribaud},
					Pages = {204--256},
					Title = {Ergodic behavior of diagonal measures and a theorem of {S}zemer\'edi on arithmetic progressions},
					Volume = {31},
					Year = {1977}}

\bib{Fur}{book}{
	Address = {Princeton, N.J.},
	Author = {Furstenberg, H.},
	Date-Added = {2010-11-16 17:59:57 +0000},
	Date-Modified = {2010-11-16 18:00:04 +0000},
	Isbn = {0-691-08269-3},
	Mrclass = {28D05 (10K10 10L10 54H20)},
	Mrnumber = {603625 (82j:28010)},
	Mrreviewer = {Michael Keane},
	Note = {M. B. Porter Lectures},
	Pages = {xi+203},
	Publisher = {Princeton University Press},
	Title = {Recurrence in ergodic theory and combinatorial number theory},
	Year = {1981}}

\bib{FurKa}{article}{
	Author = {Furstenberg, H.},
	Author = {Katznelson, Y.},
	Coden = {JOAMAV},
	Date-Added = {2010-12-07 15:20:52 +0000},
	Date-Modified = {2010-12-07 15:20:59 +0000},
	Fjournal = {Journal d'Analyse Math\'ematique},
	Issn = {0021-7670},
	Journal = {J. Analyse Math.},
	Mrclass = {28D05 (10K50 10L02 10L20)},
	Mrnumber = {531279 (82c:28032)},
	Mrreviewer = {J. H. B. Kemperman},
	Pages = {275--291 (1979)},
	Title = {An ergodic {S}zemer\'edi theorem for commuting transformations},
	Volume = {34},
	Year = {1978}}

\bib{GoSh}{article}{
Author = {Godefroy, G.},
Author = {Shapiro, J. H.},
Coden = {JFUAAW},
Date-Added = {2010-10-26 18:00:30 +0100},
Date-Modified = {2010-10-26 18:00:38 +0100},
Doi = {10.1016/0022-1236(91)90078-J},
Fjournal = {Journal of Functional Analysis},
Issn = {0022-1236},
Journal = {J. Funct. Anal.},
Mrclass = {47A65 (47B37 47B38 58F13)},
Mrnumber = {1111569 (92d:47029)},
Mrreviewer = {Warren R. Wogen},
Number = {2},
Pages = {229--269},
Title = {Operators with dense, invariant, cyclic vector manifolds},
Url = {http://dx.doi.org/10.1016/0022-1236(91)90078-J},
Volume = {98},
Year = {1991},
Bdsk-Url-1 = {http://dx.doi.org/10.1016/0022-1236(91)90078-J}}

\bib{Grivaux}{article}{    author={Grivaux, Sophie},    title={Hypercyclic operators, mixing operators, and the bounded steps    problem},    journal={J. Operator Theory},    volume={54},    date={2005},    number={1},    pages={147--168},    issn={0379-4024},    review={\MR{2168865 (2006k:47021)}} }

\bib{GrRo}{article}{
	Author = {Grivaux, S.},
	Author = {Roginskaya, M.},
	Date-Added = {2010-12-04 21:03:45 +0200},
	Date-Modified = {2010-12-04 21:03:55 +0200},
	Doi = {10.1112/blms/bdp084},
	Fjournal = {Bulletin of the London Mathematical Society},
	Issn = {0024-6093},
	Journal = {Bull. Lond. Math. Soc.},
	Mrclass = {47A16 (47B37)},
	Mrnumber = {2575335 (2011a:47025)},
	Mrreviewer = {E. A. Gallardo-Guti{\'e}rrez},
	Number = {6},
	Pages = {1041--1052},
	Title = {Multiplicity of direct sums of operators on {B}anach spaces},
	Url = {http://dx.doi.org/10.1112/blms/bdp084},
	Volume = {41},
	Year = {2009},
	Bdsk-Url-1 = {http://dx.doi.org/10.1112/blms/bdp084}}

	\bib{GP}{article}{    author={Grosse-Erdmann, K.-G.},    author={Peris, Alfredo},    title={Frequently dense orbits},    language={English, with English and French summaries},    journal={C. R. Math. Acad. Sci. Paris},    volume={341},    date={2005},    number={2},    pages={123--128},    issn={1631-073X},    review={\MR{2153969 (2006a:47017)}},    doi={10.1016/j.crma.2005.05.025} }

	\bib{GrPe}{book}{        
		author={Grosse-Erdmann, K.-G.},    
		author={Peris, Alfredo},
		title={Linear Chaos}, 
		Publisher = {Springer, Universitext, to appear},     
		volume={},    
		date={2011},    
		number={},    
		pages={},   
		issn={},    
		review={},    
		doi={}}

\bib{Leon}{article}{
	Author = {Le{\'o}n-Saavedra, Fernando},
	Date-Added = {2010-05-11 14:35:24 +0100},
	Date-Modified = {2010-05-11 14:35:56 +0100},
	Doi = {10.4064/sm152-3-1},
	Fjournal = {Studia Mathematica},
	Issn = {0039-3223},
	Journal = {Studia Math.},
	Mrclass = {47A16 (47A58 47B37)},
	Mrnumber = {MR1916224 (2003f:47012)},
	Mrreviewer = {H{\'e}ctor N. Salas},
	Number = {3},
	Pages = {201--215},
	Title = {Operators with hypercyclic {C}es\`aro means},
	Url = {http://dx.doi.org/10.4064/sm152-3-1},
	Volume = {152},
	Year = {2002},
	Bdsk-Url-1 = {http://dx.doi.org/10.4064/sm152-3-1}}

	\bib{Salas}{article}{
	    author={Salas, H{\'e}ctor N.},
	    title={Hypercyclic weighted shifts},
	    journal={Trans. Amer. Math. Soc.},
	    volume={347},    
	    date={1995},    
	    number={3},    
	    pages={993--1004},    
	    issn={0002-9947},    
	    review={\MR{1249890 (95e:47042)}},    
	    doi={10.2307/2154883} }

\bib{TV}{book}{
	Address = {Cambridge},
	Author = {Terence, T.},
	Author = {Vu, V. H.},
	Date-Added = {2010-12-07 15:23:00 +0000},
	Date-Modified = {2010-12-07 15:23:05 +0000},
	Isbn = {978-0-521-13656-3},
	Mrclass = {11-02 (05-02 05D10 11B13 11B30 11P70 11P82 28D05)},
	Mrnumber = {2573797},
	Note = {Paperback edition [of MR2289012]},
	Pages = {xviii+512},
	Publisher = {Cambridge University Press},
	Series = {Cambridge Studies in Advanced Mathematics},
	Title = {Additive combinatorics},
	Volume = {105},
	Year = {2010}}

\end{biblist}
\end{bibsection}

\end{document}